%
%
%
%
%
%
%
\documentclass[smallextended,numbook,runningheads,nospthms]{svjour3}     
\smartqed  
\usepackage{graphicx}
\usepackage{amsmath, amsthm, amsfonts}
\usepackage{epstopdf} 
\usepackage{bm}

\usepackage[T1]{fontenc}  
\usepackage{type1ec}      
\usepackage{hyperref}
\usepackage{cleveref}
\usepackage{placeins}
\usepackage{wrapfig}
\usepackage{tikz-cd}
%
\DeclareMathOperator{\Diff}{Diff}
\DeclareMathOperator{\id}{id}
\DeclareMathOperator{\op}{op}
\DeclareMathOperator{\GL}{GL}
\DeclareMathOperator{\Lip}{Lip}
\DeclareMathOperator*{\argmin}{arg\,min}
\newcommand{\Ess}{\mathcal{S}}
\newcommand{\Eff}{\mathcal{P}}

\newcommand{\N}{\mathbb{N}}
\newcommand{\cL}{\mathcal{L}}
\newcommand{\wb}{\overline}
\newcommand{\cg}{\mathfrak{g}}

\journalname{BIT}

\newtheorem{theorem}{Theorem}[section]
\newtheorem*{theorem*}{Theorem}

\theoremstyle{plain}
\newtheorem{la}[theorem]{Lemma}
\newtheorem{prop}[theorem]{Proposition}

\numberwithin{equation}{section}

\theoremstyle{definition}

\newtheorem{numba}{}[section]

\theoremstyle{remark}
\newtheorem{rem}[theorem]{Remark}

\newtheorem{example}{Example}[theorem]


\begin{document}
\title{Deep neural networks on diffeomorphism groups for optimal shape reparameterization}
\thanks{
EC has received funding from the European Union’s Horizon 2020 research and innovation programme under the Marie Sk{\l}odowska-Curie grant agreement No 860124. EC would like to thank the Isaac Newton Institute for Mathematical Sciences, Cambridge, for support and hospitality during the programmes {\it Mathematics of deep learning} and {\it Geometry, compatibility and structure preservation in computational differential equations} where work on this paper was significantly advanced. This work was supported by EPSRC grant no EP/R014604/1.
}
\titlerunning{Deep learning of diffeomorphisms}        

\author{Elena Celledoni      \and
        Helge Gl\"ockner     \and
        J\o rgen~Riseth      \and
        Alexander Schmeding
}

\institute{
    Elena Celledoni \at
    Department of Mathematics, NTNU, Trondheim \\
    \email{elena.celledoni@ntnu.no}           
    \and
    Helge Gl\"ockner \at
    Institut f\"{u}r Mathematik, Universit\"at Paderborn\\
    \email{glockner@math.uni-paderborn.de}
    \and
    Jørgen Riseth
    \at Department of Numerical Analysis and Scientific Computing, Simula Research Laboratory
    \at Department of Mathematics, University of Oslo \\
    \email{jorgennr@simula.no}
    \and
    Alexander Schmeding \at
    Department of Mathematics, NTNU, Trondheim \\
    \email{alexander.schmeding@ntnu.no}
}

\date{\today}

\maketitle

\begin{abstract}
One of the fundamental problems in shape analysis is to align curves or surfaces before computing geodesic distances between their shapes.
Finding the optimal reparametrization realizing this alignment is a computationally demanding task, typically done by solving an optimization problem on the diffeomorphism group.
In this paper, we propose an algorithm for constructing approximations of orientation-preserving diffeomorphisms by composition of elementary diffeomorphisms.
The algorithm is implemented using PyTorch, and is applicable for both unparametrized curves and surfaces.
Moreover, we show universal approximation properties for the constructed architectures, and obtain bounds for the Lipschitz constants of the resulting diffeomorphisms.
\keywords{optimal reparametrization\and shape analysis\and deep learning diffeomorphism group}
\subclass{65K1 \and 58d)5 \and 46T10}
\end{abstract}

\section{Introduction}
The geometric shape is of major importance in object recognition. The field of shape analysis gives a formal definition of the shape of an object, and provides tools to compare these shapes.
One way to represent objects mathematically is to consider their contours as parametric curves or surfaces defined on some compact domain $ \Omega \subseteq \mathbb{R}^n $.
Since the same shape may be outlined by different parametric curves that are equivalent under composition by some diffeomorphism, one typically have to solve an optimization problem on the group of orientation preserving diffeomorhpisms, \( \mathrm{Diff}^+(\Omega) \), to compute shape distances.

In this article, we propose an algorithm for solving optimization problems on $\mathrm{Diff}^+(\Omega)$ by composition of elementary diffeomorphisms.
Using concepts from deep learning, we restate the reparametrization problem such that it may be solved by training a residual neural network.
This approach admits an effective, unified framework for optimal reparametrization of both curves and surfaces.
We motivate the approach with results on infinite-dimensional Lie groups of diffeomorphisms by describing global charts for certain diffeomorphism groups, thus implying universal approximation properties.
Furthermore, we establish a priori estimates for the composition of multiple diffeomorphisms.
These estimates are of independent interest beyond the applications considered in the present paper.
Finally, we provide an implementation of the algorithm  based on the \texttt{PyTorch}-framework~\cite{paszke2019} and test it for several reparametrization problems.
The source code for our implementation of the algoritms is available at \url{https://github.com/jorgenriseth/funcshape.git}\footnote{This repository will be the subject of change following any feedback during the review process. At the point of publication this url will be replaced by one pointing to a fixed version.}

\subsection{Related work}
The methods proposed in this article were developed in the framework of shape analysis, but have their foundations in optimal control and deep neural networks. 
Our approach is a so-called \textit{learning-free} method, which does not attempt to draw conclusions to unseen instances based on features learned from big data sets, but rather attempt to find an alignment between a single pair of shapes.
As such, the method is conceptually similar to that of a \textit{deep image prior} \cite{ulyanov2018deep}, where untrained deep convolutional networks are used for image reconsunstruction tasks, such as denoising, based on a single image.

The neural network is structured to have inherently encoded some of the important properties of diffeomorphisms, such as invertibility.
To achieve this, we base the network structure on residual neural networks \cite{He}, where each layer is of the form $ x \mapsto x + f(x)$.
Here $f:\mathbb{R}^{d}\rightarrow \mathbb{R}^d$ depends on parameters, e.g.\ for so-called dense layers $f(x):=\sigma(Ax+b)$, where $A$ is a $d\times d$ matrix, $b\in\mathbb{R}^d$ and $\sigma$ is a scalar nonlinearity applied componentwise.

Residual networks may further be related to approximations of diffeomorphisms, by interpreting them as flows of ordinary differential equations \cite{haber2017,weinan2017proposal,Benning2019ode}.
Assuming the parameters to be time-dependent functions $A(t)$ and $b(t)$, each layer may be interpreted as a forward Euler discretization of the non-autonomous differential equation 
\begin{equation}
    \label{resnetEq}
\dot{x}(t)=f(x,t), \quad x(0)=x,\qquad f(x,t):=\sigma(A(t)x+b(t)),
\end{equation}
$$
    x\mapsto \psi_L,\quad \psi_k=\psi_{k-1}+hf(\psi_{k-1},t_k),\quad k=1,\dots, L, \quad \psi_0=x,
$$
where $h$ is the step-size often taken to be equal to $1$, and with $t_k=kh$.
For $f$ Lipschitz continuous in~$x$, the Euler method is known to converge on bounded time intervals to the solution of the differential equation as $h\rightarrow 0$.
A sufficient condition for the invertibility of the layer map is that $hf$ is Lipschitz continuous, with Lipschitz constant $\mathrm{Lip}(hf)<1$ (also called $1$-Lipschitz).
For activation functions $\sigma$ with bounded derivative, most layer types, e.g.\ dense layers or convolutional layers, can be made $1$-Lipschitz by appropriate scaling \cite{celledoni2021}. See for example \cite{behrmann2021} for a concrete strategy for choosing such a scaling.

The necessity of invertible layers in neural networks is not restricted to the ODE-interpretation of residual neural networks. For example, normalizing flows are a class of machine learning models that may be used to artificially generate data \cite{tabak2010}. Given $m$ independent observations of $n$ random variables one seeks an estimate of their underlying probability density $\rho(x)$. The approximation of the probability density is achieved by mapping $x$ to a new set of variables $y=\varphi ^{-1}(x)$ with known (for example normal) distribution $\mu(y)$.  
Then
$$ \rho(x)=J_y(x)\mu(y(x)),$$
where $J_y$ is the Jacobian of the map $\varphi ^{-1}$. The map $\varphi ^{-1}$ is built by means of a gradient ascent flow increasing the log-likelihood of the sample data with respect to its density.

While the evolution and growth of the field of deep learning  have primarily been driven by empirical results
throughout the last decade, there are multiple approximation theorems which provide a theoretical basis for the effectiveness of neural networks. For example, consider the space of one-layer neural networks of the form
\begin{equation}
    \psi: [0, 1]^n \to \mathbb{R}, \qquad \psi(x):= \sum_{j=1}^N\alpha_j\sigma(w_j^T x+b_j),
\end{equation}
with parameters $\alpha_j,b_j \in \mathbb{R}$,  $\mathbf{w}_j \in \mathbb{R}^n$, and some activation function $ \sigma: \mathbb{R} \to \mathbb{R} $. In 1989, Cybenko~\cite{cybenko1989} proved that such networks are dense in the space of continuous functions with respect to the supremum norm as $N \to \infty$, provided that $\sigma$ is continuous and satisfies
\[
    \sigma (t)\rightarrow 
    \left\{\begin{array}{ccc}
        1 & & t\rightarrow +\infty,\\
        0 & & t\rightarrow -\infty.
    \end{array}\right.
\]

In \cite{agrachev2009}, the authors provide a similar approximation result for diffeomorphisms isotopic to the identity, defined on a compact manifold $\Omega$. They show that such diffeomorphisms may be expressed as a finite composition of exponential maps of vector fields that have been scaled by smooth functions, as long as the vector fields belong to a bracket-generating family of functions. In \cite{agrachev2021}, this result is extended to more general maps on $\Omega$.

\subsection{Structure of the Article}
The
article is structured as follows: Section~\ref{background} provides background on shape analysis and the definition of the proposed deep-learning approach.
\Cref{sect:Diffeos} gives universal approximation results and the theoretical basis for the deep learning approach.
\Cref{sect:multcomp} describes a priori estimates of the $C^k$-distance between the identity diffeomorphism and the network $\varphi$, given in terms of elementary diffeomorphisms.
\Cref{sect:methods} provides details on how the proposed algorithm is implemented.
\Cref{sec:numres} shows the results of applying the algorithm to test examples involving both parametric curves, surfaces and +nFinally, \cref{sec:discussion} discusses some of the choices made in this article and potential ideas for further improvements of the algorithm. 

\section{Background and definition of the proposed numerical method}\label{background}
\subsection{Shape space and shape metrics}
\subsubsection*{Orientation-Preserving Diffeomorphisms}
Consider the space of immersions from $\Omega =[0, 1]$ into euclidean space $\mathbb{R}^d$,   
\begin{equation}
    \Eff := \text{Imm}(\Omega, \mathbb{R}^d) = \left\{c\in C^\infty(\Omega, \mathbb{R}^d) \colon  c'(t)  \neq 0, \; \forall t\in \Omega\right\}.
\label{eq: curve space}
\end{equation}
To be able to identify two curves representing the same shape, consider the set of orientation-preserving diffeomorphisms, which consists of monotonically increasing functions from $\Omega$ onto itself,
\begin{equation}
     \text{Diff}^+(\Omega) = \left\{\varphi \in C^\infty(\Omega, \Omega) \,\colon\, \varphi(0) = 0,\, \varphi(1) = 1,\;  \;  \varphi' > 0\right\}.
\label{eq: reparametrization group}
\end{equation}
The group $\text{Diff}^+(\Omega)$ is an infinite-dimensional Lie group. \Cref{sect:Diffeos} provides more details on this group. For now, we note that $\text{Diff}^+(\Omega)$ has a canonical right group action on $\Eff$ by composition,
\[
    \Eff \times \text{Diff}^+(\Omega) \to \Eff , \qquad (c, \varphi) \mapsto c \circ \varphi.
\]
We shall refer to this as reparametrization of $c$ by $\varphi$. With respect to the reparametrization action, we can now describe shape spaces.

\subsubsection*{Shape Space}
Elements of shape space are interpreted as \textit{unparametrized curves}. The shape space is the set of $\text{Diff}^+(\Omega)$-orbits
\[
 \Ess = \Eff \;/\; \text{Diff}^+(\Omega).
\]
 Our goal is to obtain geometrically sound deformations between elements in $ \Ess $ and compute distances thereof, by means of a metric $d_\Ess$. Such a metric is typically constructed by defining a reparametrization-invariant metric $ d_\Eff $ on $ \Eff $. Due to reparametrization invariance (cf.\ \cite{celledoni2017}), 
 \begin{equation}
    d_\Ess([c_1], [c_2]) = \inf_{\varphi\in\text{Diff}^+(\Omega)}d_\Eff(c_1, c_2\circ\varphi)
\label{eq: Shape Metric}
\end{equation}
defines a metric on \( \Ess \). Hence, to compute the distance between two shapes, we need to solve an optimization problem. The algorithm presented in this article aims to solve this problem. 

\subsubsection*{The SRVT and Q-transform for Curves}
One way to define a suitable distance function $ d_\Eff $ is by exploiting certain transformations which transform familiar metrics to obtain reparametrization- invariant (Riemannian) metrics. One such transformation is  the square-root velocity transform (SRVT) for curves, introduced in \cite{srivastava2011}. It is given by
\[
    \mathcal{R}\colon \Eff  \to C^\infty(\Omega, \mathbb{R}^d) \setminus \{0\}, \quad  c \mapsto  \frac{c'}{\sqrt{\lvert c'\rvert}}.
\]
It is easy to see that $\mathcal{R}(c\circ\varphi)(t) = \sqrt{\varphi'(t)}\; (\mathcal{R}(c)\circ\varphi)(t)$.
Hence we can pull back the $ L^2 $-inner product on function spaces to obtain a reparametrization-invariant distance:
\[ 
    d_\Eff(c_1, c_2) := \left\|\mathcal{R}(c_1) - \mathcal{R}(c_2)\right\|_{L^2(\Omega, \mathbb{R}^d)} = \left(\int_\Omega \lvert\mathcal{R}(c_1)(t) - \mathcal{R}(c_2)(t) \rvert^2\;dt \right)^{1/2}.
\]
The SRVT was generalized to curves on Lie groups and homogeneous spaces \cite{celledoni2016}. 
Another transformation with similar properties as the SRVT, which allows us to construct a different metric, is the
so-called \emph{Q-transform} \cite{mani2010,bauer2014}:
\begin{equation}
    Q\colon  \Eff \to C^\infty(\Omega, \mathbb{R}^d), \qquad c(\cdot) \mapsto \sqrt{\lvert c'(\cdot)\rvert} c(\cdot).
\label{eq: q transform}
\end{equation}

\subsubsection*{Surfaces embedded in 3D space}
To extend the framework for computing shape distances to parametric surfaces embedded in $ \mathbb{R}^3 $, we will assume for simplicity that the surfaces are defined on the unit square $ \Omega = [0, 1]^2 $. The main differences between shapes of curves and of surfaces pertain the group of diffeomorphisms. Further, we need analogues to the SRVT and Q-transform.

Denote by $ f_x, f_y $ the partial derivatives of a parametric surface $ f $ and by
\[
    a_f( \mathbf{x} ) = \lvert f_x( \mathbf{x})  \times f_y( \mathbf{x})  \rvert
\]
the area scaling factor of the surface. Now the pre-shape space for surfaces is 

\begin{equation} 
    \Eff = \left\{ f\in C^\infty(\Omega, \mathbb{R}) \; : \;  a_f(x) > 0  \; \forall x\in\Omega \right\}.
\label{eq: surfaces immersions} 
\end{equation}
 The group of orientation-preserving diffeomorphisms consists of smooth invertible maps from $ \Omega $ onto  itself such that the Jacobian determinant $ J_\varphi $ is positive, 
\[
    \text{Diff}^+(\Omega) = \left\{ \varphi \in C^\infty(\Omega, \Omega) \; : \;  J_\varphi(\mathbf{x}) > 0 \; \forall \mathbf{x}\in \Omega, \; \varphi(s)\in\partial \Omega \; \forall s \in \partial\Omega \right\}. 
\]
Its tangent space $T_\varphi \text{Diff}^+(\Omega)$ at $\varphi$ consists of smooth boundary-respecting vector fields on $ \Omega $ (see~\cite{michor1980}).

\subsubsection*{The SRNF and Q-transform of Surfaces}
The square root normal field (SRNF), defined in \cite{jermyn2012}, is a generalization of the SRVT to surfaces. It is defined by  
\[
    \mathcal{N}(f)(\mathbf{x}) = \sqrt{a_f(\mathbf{x})}\mathbf{n}_f(\mathbf{x)}
\]
where $ \mathbf{n}_f(\mathbf{x}) $ is the normal vector field on $ f $. Applied to a reparametrized surface, the transform satisfies the relation 
\[
    \mathcal{N}(f\circ\varphi) = \sqrt{J_\varphi} (\mathcal{N}(f) \circ \varphi).
\]
Following the same derivation as was given for curves, we may now define a reparametrization-invariant pre-shape distance by  
\[
    d_\Eff(f_1, f_2) = \|\mathcal{N}(f_1) - \mathcal{N}(f_2)\|_{L^2(\Omega, \mathbb{R}^3)} ,
\] 
since the factor $ \sqrt{J_\varphi} $ cancels
the Jacobian determinant introduced by the change of variables in the integral. While the SRNF shares some properties of the SRVT, for example the translation invariance, it is not invertible. In \cite{klassen2019}, the authors present several examples of surfaces that represent different shapes, but have the same image under the SRNF. However, the SRNF framework has shown some merit in practice as shown in \cite{su2020} and the references therein.
 
Alternatively, we can use the $Q$-transform for surfaces, which was introduced in \cite{kurtek2010}. The $Q$-transform for surfaces is defined by
\[ 
    Q(f)(\mathbf{x}) = \sqrt{a_f(\mathbf{x})}f(\mathbf{x})
\]
and satisfies the same property of reparametrization-invariance with respect to the $ L^2 $-norm.

\subsection{The optimization problem on the diffeomorphism group}
The metric on shape space is induced by suitable metrics on the pre-shape space. In the approaches sketched above, this metric is the geodesic distance of a Riemannian metric arising as pullback of the $L^2$-metric via one of the transformations mentioned. To compute the metric on shape space one needs to find the elements in the $\Diff^+ (\Omega)$-orbits which are closest to each other. However, due to equivariance of the transformations, this can equivalently be formulated as the following optimization problem for the $L^2$-distance (exploiting that $(C^\infty (\Omega,\mathbb{R}^d), L^2)$ is a pre-Hilbert space and the set in which we compute the distance is an open (but non-convex) subset of this space). Assume that $f_1,f_2 \in \Eff$, and we want to compute $d_{\Ess} ([f_1],[f_2])$. Let $\mathcal{T}$ be one of the transforms discussed above and $q_i := \mathcal{T}(f_i), i=1,2$. Then we need to solve the following optimization problem:
\begin{align}
\varphi^*={\arg\inf}_{\varphi\in \mathrm{Diff}^+(\Omega)}E(\varphi),\qquad E(\varphi):=\|q_1-\sqrt{\varphi'}\,(q_2\circ \varphi)\|^2_{L^2}.
\label{eq: optimization problem}
\end{align}
Note that the optimization problem has been formulated as the \textit{squared} $L^2$-distance. Solving this optimization problem, we can compute the metric distance on shape space if both $q_1$ and $q_2$ are contained in a convex subset of the image of the transformation. While this is not always the case (for example, it does not hold
for a pair of curves which get mapped by the SRVT to elements $q_1$ and $-q_1$), the assumption always holds locally for mappings which are not too far apart. In any case, solving efficiently this optimization problem is crucial in shape analysis and the main purpose of the present paper is to provide tools from machine learning for its solution.

\subsection{A Riemannian gradient descent method}

A gradient descent approach to solve \eqref{eq: optimization problem} is obtained by representing the gradient of the functional $E$ 
by means of an othonormal basis of $T_{\mathrm{id}}\mathrm{Diff}^+(\Omega)$. Suppose $f_j\in T_{\mathrm{id}}\mathrm{Diff}^+(\Omega)$, $j=1,2\dots$ is an orthonormal basis of $T_{\mathrm{id}}\mathrm{Diff}^+(\Omega)$ with respect to an inner product $\langle \cdot, \cdot \rangle_{\mathrm{id}}$ on $T_{\mathrm{id}}\mathrm{Diff}^+(\Omega)$. 
Consider the left multiplication $L_{\varphi}$ for some $\varphi \in \mathrm{Diff}^+(\Omega)$ mapping $\psi \mapsto \varphi \circ \psi$. 
We denote with $\mathrm{d}L_{\varphi}:T_{\mathrm{id}}\mathrm{Diff}^+(\Omega)\rightarrow T_{\varphi}\mathrm{Diff}^+(\Omega)$ the corresponding derivative mapping at the identity. 
The gradient of $E$ with respect to $\langle \cdot, \cdot \rangle_{\mathrm{id}}$ is defined as the unique
vector field $\mathrm{grad}\, E$ on $\mathrm{Diff}^+(\Omega)$ satisfying
$$
\mathrm{d}E\big\vert_{\varphi}(w_\varphi)=\langle \mathrm{d}L_{\varphi}^{-1} (\mathrm{grad}\, E({\varphi})) , \mathrm{d}L_{\varphi}^{-1} w_{\varphi} \rangle_{\mathrm{id}}, \qquad \forall w_{\varphi}\in T_{\varphi}\mathrm{Diff}^+(\Omega).
$$
We can formally represent $\mathrm{grad}\,  E({\varphi})$ using the basis of $T_{\mathrm{id}}\mathrm{Diff}^+(\Omega)$, by
\begin{equation}
    \label{gradBasis}
\mathrm{d}L_{\varphi}^{-1} (\mathrm{grad}\, E({\varphi}))=\sum_{j=1}^\infty \lambda_j f_j,
\end{equation}
with $\lambda_j=\mathrm{d}E\big\vert_{\varphi}(\mathrm{d}L_{\varphi}(f_j))$.

In practice, the proposed gradient descent algorithm on $\mathrm{Diff}^+(\Omega)$ is obtained by truncating \eqref{gradBasis} and projecting the gradient on the subspace of $T_{\mathrm{id}}\mathrm{Diff}^+(\Omega)$ spanned by a finite number $M$ of basis elements.
This gives
$$\mathrm{d}L_{\varphi}^{-1}\widehat{\mathrm{grad}\, E}(\varphi)=\sum_{j=1}^M \lambda_j f_j,$$
and we obtain the following update rule for the gradient descent iteration:
$$\varphi^{(n+1)}=\varphi^{(n)}\circ \left( \mathrm{id}-\eta\, \mathrm{d}L_{\varphi^{(n)}}^{-1}\widehat{\mathrm{grad}\, E} \big(\varphi^{(n)}\big)\right), \quad n=0,1,\dots,$$
with $\varphi_0=\mathrm{id}$, where $\eta$ is a scalar parameter optimized to speed-up convergence and to guarantee the invertibility of $\varphi^{(n+1)}$.
Written in terms of the basis, the gradient descent iteration becomes
\begin{equation}
    \label{GradDisc}
\varphi^{(n+1)}=\varphi^{(n)}\circ \left( \mathrm{id}-\eta\, \sum_{j=1}^M\lambda_j^n f_j\right), \quad n=0,1,\dots,
\end{equation}
and $\{\lambda_j^n\}_{j=1}^M$ are coefficients determined at each iteration $n$.
In \cite{riseth2021}, this update rule was shown to be equivalent to an algorithm proposed for reparametrization of surfaces in \cite{kurtek2012} and to a similar algorithm created for curves in \cite{srivastava2011}.

\subsection{Main method: Deep learning of diffeomorphisms}\label{sect:deeplearning}
It can be observed that after $L$ iterations of the gradient descent algorithm \eqref{GradDisc}, the diffeomorphism $\varphi^{(L)}$ is the composition of $L$ elementary diffeomorphisms defined as the identity plus a weighted sum of $M$ basis functions.
This observation may be exploited to create a deep learning approach to the reparametrization problem.

The goal is to find a minimizer of the loss function in \eqref{eq: optimization problem}.
However, instead of using the whole group $\Diff^+(\Omega)$, we fix some $L, M \in \N$ and restrict ourselves to functions of the form  
\begin{equation}
    \varphi = \varphi_L\circ \cdots \circ\varphi_1,\quad \mathrm{where}\quad \varphi_{\ell}:=\mathrm{id}+\sum_{n=1}^M w_n^{\ell} f_n,\qquad \ell=1,\dots , L,
    \label{eq:elem_diff}
\end{equation}
for a set of basis function $ \{f_j\}^M_{j=1} \subset T_{\id}\text{Diff}^+(\Omega)$.
Here  $\{w_j^\ell\}_{j=1}^M$ are coefficients to be found by the optimization algorithm.

The computational graph of the function \( \varphi \) in \eqref{eq:elem_diff} has a very similar structure to that of a deep residual network \cite{He}.
Both the input-values \(x_0\) and output-values \(x_L\) of the network are points in the domain \( \Omega \).
The function evaluation is defined by the recursive relation
\[
    \varphi(x_0) = x_L, \qquad x_{\ell+1} = \varphi(x_\ell) = x_\ell + F_{w^\ell}(x_\ell),\qquad \ell=0, ..., L-1.
\]
By using a discrete analogue of \eqref{eq: optimization problem} as the loss function, then the problem of finding an optimal reparametrization between two curves --- i.e. finding the optimal coefficients \( \{w_j^\ell\}_{j=1}^M \) --- coincides with that of training a neural network.
To ensure that $\varphi \in \Diff^+(\Omega)$, we need to restrict the set of weights to some subset $\mathcal{W} \subset \mathbb{R}^{ML}$. Further details on this restriction are given in \cref{sect:methods-curves}.

The increased performance of the deep learning approach \eqref{eq:elem_diff} can be seen in figure \ref{fig: gd comparison}, which compares the proposed method to the gradient descent algorithm. This figure indicates that the deep learning approach converges towards a significantly better solution than gradient descent.
A preliminary implementation of this approach and a proof of concept was presented in \cite{riseth2021}.
{\textwidth=0.8\textwidth
\begin{figure}[htb!]
    \centering
    \includegraphics[width=0.8\textwidth]{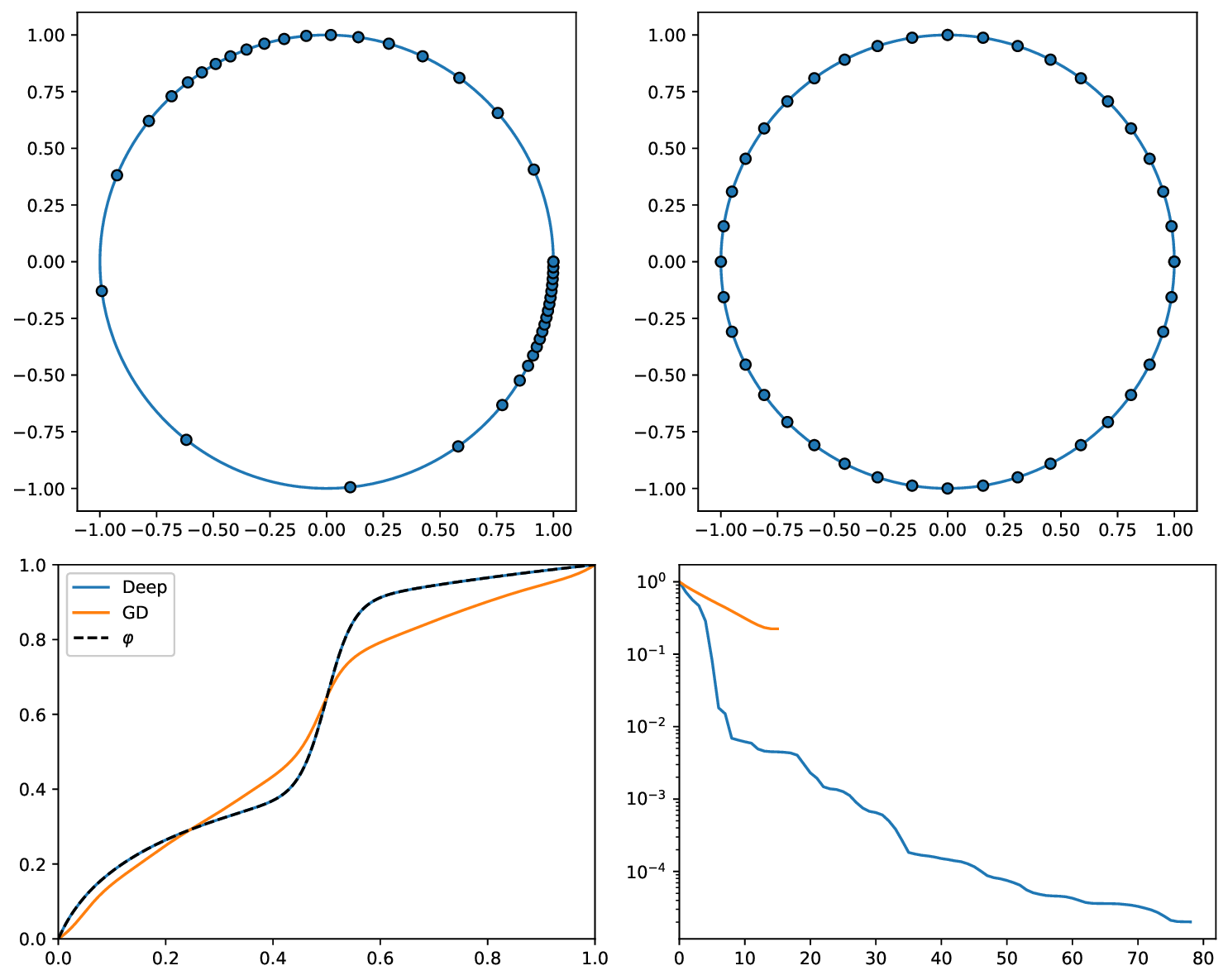}
    \caption{Comparison between the Riemannian gradient descent (GD) and the deep residual network approach for reparametrization of curves. The GD approach used 6 basis functions, whereas the network approach used 6 basis functions and 6 layers. (Top): A curve $c_2$ (left) and the same curve with a different parametrization $c_1=c_2\circ \varphi$ (right). The goal is to reconstruct $\varphi$. (Bottom left): The diffeomorphisms found by the different algorithms, compared to the target diffeomorphism $ \varphi$. The diffeomorphism found by the deep neural network approach is visually indistinguishable from the target. (Bottom right): Convergence plots for the two algorithms, showing the error of the cost function (see \cref{sect:implementation}) as compared to the iteration number of the optimization algorithm. The error is given relative to the initial error.}
    \label{fig: gd comparison}
\end{figure}
}

\section{Theoretical justification}\label{sect:Diffeos}
This section provides the main theoretical motivation for the proposed deep learning approach for optimization on $\mathrm{Diff}^+(\Omega)$.
We shall first recall relevant (infinite-dimensional) Lie group structures on diffeomorphism groups.
Then we show that finite compositions of diffeomorphisms of the type $\varphi_{\ell}=\id+ f_{\ell}$, $\ell=1,\dots , L$ with $f_{\ell}$ a $1$-Lipschitz vector field can be used to describe the whole group of diffeomorphisms of a cube $\Omega=[0,1]^d$ (Section~\ref{sect:cornerresults}).
If $\Omega$ is, more generally, a compact convex subset of ${\mathbb R}^d$ with dense interior (like a disk $\Omega=\{(x,y) \in \mathbb{R}^2 : x^2 + y^2 \leq r^2\}$ in $\mathbb{R}^2$), analogous diffeomorphisms generate the group of diffeomorphisms fixing the boundary of $\Omega$ (see Section~\ref{fixingBoundary}).
We are mostly interested in the case of a cube $[0,1]^d$; in fact, only the unit interval $[0,1]$ and square $[0,1]^2$ are used in the numerical calculations.
As explained in Section~\ref{sect:deeplearning}, eventually our methodology is implemented first restricting $f_{\ell}$ to a finite-dimensional subspace of the Lie algebra of vector fields and then constraining $f_{\ell}$ to be $1$-Lipschitz, see Section~\ref{sec:numres} for details.
This introduces further approximations which we discuss briefly but do not analyze in detail in the present work.

\subsection{Diffeomorphisms fixing the boundary}\label{fixingBoundary}

Lie group structures on diffeomorphism groups of compact manifolds have been studied in many works (see, e.g., \cite{michor1980,hamilton1982,milnor1984,omori1974}), including the case of manifolds with boundary or corners.
In this section, we consider a compact convex subset $\Omega\subseteq {\mathbb R}^d$ with dense interior (having in mind the main example $\Omega=[0,1]^d$).
A map on $\Omega$ is smooth if it extends to a smooth map on an open subset in ${\mathbb R}^d$; we can therefore speak about the group $\Diff(\Omega)$ of all $C^\infty$-diffeomorphisms of~$\Omega$.
Following \cite{gloeckner2017}, we consider the subgroup
\[
    \Diff_\partial (\Omega) := \{\phi \in \Diff (\Omega) : \phi(k) = k, \forall k \in \partial\Omega\}
\]
of all smooth diffeomorphisms fixing the topological boundary $\partial \Omega$ pointwise.  It has been shown in \cite{gloeckner2017} that if we endow $\Diff_\partial (\Omega)$ with the compact-open $C^\infty$-topology, the group becomes a manifold modeled on the space
$$
    C^\infty_\partial (\Omega, \mathbb{R}^d) = \{f \in C^\infty (\Omega,\mathbb{R}^d) : f(k) = 0, \forall k \in \partial \Omega\}.
$$
This space is an infinite-dimensional locally convex space (with respect to the compact-open $C^\infty$-topology) and its elements can be identified with vector fields which vanish on the boundary. This structure turns the space $\Diff_\partial (\Omega)$ into an infinite-dimensional Lie group.
Using a canonical identification yields $T_{\id} \Diff_\partial (\Omega) = C^\infty_\partial (\Omega,\mathbb{R}^d)$. Further, $\Diff_\partial (\Omega)$ admits a global chart  
\begin{align}\label{glob:chart}
    \kappa \colon \Diff_\partial (\Omega) \rightarrow C^\infty_\partial (\Omega,\mathbb{R}^d) ,\quad \phi \mapsto \phi -\id_\Omega,
\end{align}
which arises as a restriction of the map $\kappa \colon C^\infty (\Omega,\mathbb{R}^d) \rightarrow C^\infty (\Omega,\mathbb{R}^d), f\mapsto f - \id_\Omega$.
For vector fields $f\in \kappa(\Diff_\partial (\Omega))$, the inverse $\kappa^{-1}(f) = f + \id_\Omega$ is in $\Diff_\partial (\Omega)$ and we can obtain all elements in $\Diff_\partial (\Omega)$ in this way.

By the preceding, every diffeomorphism fixing the boundary can be expressed as a vector field (vanishing on the boundary) plus the identity. Thus, one only needs to approximate vector fields and can ignore the non-linearity of the infinite-dimensional manifold. However, there are two practical problems for us to solve:
\begin{enumerate}
    \item Describe the image of the global chart $\kappa$.
    \item Establish approximation results by sampling from a finite-dimensional subspace of the Lie algebra.
\end{enumerate}
As the global chart is highly dependent on the geometry of $\Omega$, there is no easy comprehensive way to describe its image. However, it is easy to provide sufficient conditions for a vector field to  lie inside the image of $\kappa$. 

\begin{example}
Let $\Omega =[a,b]$ be a compact interval. Then a vector field $f$ in $C^\infty_\partial ([a,b],\mathbb{R})$ is in the image of $\kappa$ if $f'(x) >-1$ for all $x \in [a,b]$ since then $\id_{[a,b]}+f$ will be monotonically increasing.
\end{example}

The situation is more complicated for higher-dimensional sets. We do not have any global information
concerning the set~$\Omega$ then.
However, there is a convenient sufficient
condition ensuring that a map is in~$\Omega$,
which we describe now.
%

Fix any norm on $\mathbb{R}^d$ to calculate Lipschitz constants for functions from $\Omega\subseteq \mathbb{R}^d$ to~$\mathbb{R}^d$, operator norms,
and open balls $B_r(x):=\{y\in\mathbb{R}^d\colon \|y-x\|<r\}$
for $x\in\mathbb{R}^d$ and $r>0$.
Setting
\begin{equation}
    \label{the-norms}
\|f\|_{\infty,\op}:=\max_{x\in \Omega}\|f'(x)\|_{\op}
\end{equation}
for $f\in C^\infty_{\partial}(\Omega,\mathbb{R}^d)$, we obtain a continuous seminorm on $C^\infty_{\partial}(\Omega,\mathbb{R}^d)$.
Hence
\[
\mathcal{U}_1 :=\{f\in C^\infty_{\partial}(\Omega,\mathbb{R}^d) \colon \Lip(f)<1\}= \{f\in C^\infty_{\partial}(\Omega,\mathbb{R}^d)\colon \|f\|_{\infty,\op}<1\}
\]
is an open $0$-neighborhood in $C^\infty_{\partial}(\Omega,\mathbb{R}^d)$.

\begin{la}\label{la:inchartdomain}
The map $\id_\Omega+f$ is an element of $\Diff_{\partial}(\Omega)$ for all $f\in \mathcal{U}_1$. Thus $\mathcal{U}_1 \subseteq \kappa(\Diff_\partial(\Omega))$ and it makes sense to consider
$\kappa^{-1}(f) = \id_\Omega + f$.
\end{la}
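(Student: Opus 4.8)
The plan is to verify directly that $\phi := \id_\Omega + f$ satisfies the four defining properties of an element of $\Diff_\partial(\Omega)$: it is smooth, it fixes $\partial\Omega$ pointwise, it is a bijection of $\Omega$, and it has a smooth inverse. The first two are immediate: $\phi$ is smooth because $f\in C^\infty_\partial(\Omega,\R^d)$ extends smoothly to a neighbourhood of $\Omega$ (as does $\id_\Omega$), and $\phi(k)=k+f(k)=k$ for every $k\in\partial\Omega$ since $f$ vanishes on the boundary. Injectivity follows from the Lipschitz bound: if $\phi(x)=\phi(y)$ then $x-y=f(y)-f(x)$, so $\|x-y\|\le\Lip(f)\|x-y\|$, and since $\Lip(f)<1$ this forces $x=y$. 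For the local structure I would note that $\phi'(x)=\id_{\R^d}+f'(x)$ with $\|f'(x)\|_{\op}\le\|f\|_{\infty,\op}<1$, so a Neumann series shows $\phi'(x)$ is invertible at every point (and in fact $\det\phi'(x)>0$, since every eigenvalue of $f'(x)$ has modulus $<1$); thus $\phi$ is an orientation-preserving local diffeomorphism.

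The heart of the argument, and the step I expect to be the main obstacle, is to show that the image is exactly $\Omega$, i.e.\ $\phi(\Omega)=\Omega$ --- a genuinely global statement not implied by the pointwise Lipschitz bound alone (a convexity/supporting-hyperplane estimate fails precisely when $x$ is close to a supporting hyperplane but far from its contact point). I would establish this with a topological degree argument. Consider the affine homotopy $\phi_t:=\id+tf$, $t\in[0,1]$; each $\phi_t$ restricts to the identity on $\partial\Omega$, so $\phi_t(\partial\Omega)=\partial\Omega$. Hence for any $y\in\R^d\setminus\partial\Omega$ the Brouwer degree $\deg(\phi_t,\mathrm{int}\,\Omega,y)$ is defined for all $t$ and, by homotopy invariance, equals $\deg(\id,\mathrm{int}\,\Omega,y)$. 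For $y\in\mathrm{int}\,\Omega$ this degree is $1\ne0$, so $y\in\phi(\mathrm{int}\,\Omega)$; as $\phi(\Omega)$ is compact and $\mathrm{int}\,\Omega$ is dense in $\Omega$, this yields $\Omega\subseteq\phi(\Omega)$. For $y$ in the unbounded complement the degree is $0$; since $\phi$ is injective and orientation preserving, any preimage of $y$ in $\mathrm{int}\,\Omega$ would contribute $+1$, so $y\notin\phi(\Omega)$, giving $\phi(\Omega)\subseteq\Omega$. (Equivalently one may avoid degree theory: invariance of domain makes $\phi(\mathrm{int}\,\Omega)$ open, injectivity forces it to be disjoint from $\phi(\partial\Omega)=\partial\Omega$ with topological boundary inside $\partial\Omega$, and connectedness pins $\phi(\mathrm{int}\,\Omega)$ to the bounded component $\mathrm{int}\,\Omega$; a closedness-in-$\mathrm{int}\,\Omega$ argument via compactness of $\Omega$ upgrades this to equality.)

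It remains to see that the inverse is smooth. Since $f$ extends to a smooth map on an open $U\supseteq\Omega$ with $\|f'\|_{\op}<1$ persisting on a smaller neighbourhood (by continuity of $f'$ and compactness of $\Omega$), the extension $\id+f$ is an injective local diffeomorphism on a convex neighbourhood of $\Omega$, hence a diffeomorphism onto an open set; its restriction supplies a smooth inverse for $\phi$ in the sense used for maps on $\Omega$. Combining the four properties gives $\phi\in\Diff_\partial(\Omega)$. Finally, since $\kappa(\phi)=\phi-\id_\Omega=f$, we conclude $f\in\kappa(\Diff_\partial(\Omega))$ and $\kappa^{-1}(f)=\id_\Omega+f$, completing the proof. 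The only genuinely nontrivial point is the global control of the image in the middle paragraph; everything else is routine once the Lipschitz bound $\Lip(f)<1$ is in hand.
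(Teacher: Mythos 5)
Your proof is correct, but it routes the key global step through machinery the paper avoids. Where you invoke Brouwer degree and homotopy invariance of $\phi_t=\id+tf$ to pin down $\phi(\Omega)=\Omega$, the paper's argument is elementary and purely metric: for $x_0\in\Omega^\circ$ let $r$ be the distance to $\partial\Omega$ and $y_0\in\partial\Omega$ a nearest boundary point; since $f(y_0)=0$, one gets $\|f(x_0)\|=\|f(x_0)-f(y_0)\|\leq \Lip(f)\,r<r$, so $\phi(x_0)\in B_r(x_0)\subseteq\Omega^\circ$ and hence $\phi(\Omega^\circ)\subseteq\Omega^\circ$ in two lines. Surjectivity then follows exactly along the lines of your parenthetical degree-free sketch: $\phi(\Omega^\circ)$ is open (inverse function theorem) and closed in $\Omega^\circ$ (compactness of $\phi(\Omega)$ together with $\phi(\partial\Omega)=\partial\Omega$), so by connectedness $\phi(\Omega^\circ)=\Omega^\circ$. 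This means your parenthetical worry --- that the pointwise Lipschitz bound alone cannot control the image and that a supporting-hyperplane estimate fails --- is too pessimistic: the crucial extra input is not convexity of $\Omega$ near $x_0$ but simply that $f$ \emph{vanishes at the nearest boundary point}, which turns the Lipschitz bound into a displacement bound smaller than the distance to the boundary. Your degree argument is a valid and self-contained substitute (and it neatly handles inclusion and surjectivity in one stroke, requiring only that the boundary be fixed), but it needs the orientation computation via the spectral radius of $f'(x)$, which the elementary route never touches. The two proofs also diverge on smoothness of the inverse: you extend $f$ to a convex open neighbourhood of $\Omega$ where $\|f'\|_{\op}<1$ persists and apply the inverse function theorem there, while the paper stays on $\Omega$ and bootstraps regularity by induction from the identity $(\phi^{-1})'(x)=\bigl(\phi'(\phi^{-1}(x))\bigr)^{-1}$, upgrading $C^k$ to $C^{k+1}$; your version is arguably slicker but leans on the extension definition of smoothness, whereas the paper's bootstrap transfers verbatim to the finite-regularity setting of its subsequent remark on $\Diff^{C^\ell}_\partial(\Omega)$.
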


\begin{proof} To keep the notation short, we write $\kappa^{-1}(f) := f + \id_\Omega$, though strictly speaking it makes only sense to use the inverse of the chart once the proof has been concluded.
Given $f\in \mathcal{U}_1$, the map $\kappa^{-1}(f) \colon \Omega\to\mathbb{R}^d$ is injective, since $\kappa^{-1}(f)(x)=\kappa^{-1}(f)(y)$ with $x\not=y$ in~$\Omega$ would entail that
\[
\|x-y\|=\|f(y)-f(x)\|\leq \Lip(f)\|x-y\|<\|x-y\|,
\]
contradiction.
Since $\kappa^{-1}(f)(x)=x$ for each $x\in \partial \Omega$, we have
\begin{equation}\label{eq_boundarypreservation}
    \kappa^{-1}(f)(\partial \Omega)=\partial \Omega.
\end{equation}
Using the injectivity of $\kappa^{-1}(f)$, we obtain for the interior $\Omega^\circ$ of $\Omega$
\begin{equation}\label{inside-outside}
    \kappa^{-1}(f)(\Omega^\circ)\cap\partial \Omega=\emptyset.
\end{equation}
Since $\|f'(x)\|_{\op}<1$, we have $\kappa^{-1}(f)'(x)=\id_{\mathbb{R}^d}+f'(x)\in\GL(\mathbb{R}^d)$ for each $x\in \Omega^\circ$, whence $\kappa^{-1}(f)\vert_{\Omega^\circ}$ is a local $C^\infty$-diffeomorphism at each $x\in \Omega^\circ$ (by the Inverse Function Theorem) and hence an open map.

Next, we pick an $x_0$ in the interior $\Omega^\circ$ of~$\Omega$.  Then the distance
\[
    r:=\min\{\|x_0-y\|\colon y\in \partial \Omega\}
\]
is positive.  Note that
\begin{equation}\label{ball-inside}
B_r(x_0):=\{y\in \mathbb{R}^d\colon \|x_0-y\|<r\}\subseteq \Omega^\circ .
\end{equation}
In fact, since $B_r(x_0)\cap\partial \Omega=\emptyset$, the disjoint open sets $B_r(x_0)\cap \Omega^\circ
\not=\emptyset$ and $Q:=B_r(x_0)\cap (\mathbb{R}^d\setminus \Omega)$ cover~$B_r(x_0)$.  Since $B_r(x_0)$ is connected, $Q=\emptyset$ follows.\\[2mm]
There exists $y_0\in\partial \Omega$ such that $\|x_0-y_0\|=r$. Since $f(y_0)=0$, we have
\[
\|f(x_0)\|=\|f(x_0)-f(y_0)\|\leq \Lip(f)\|x_0-y_0\|<r.
\]
Hence
\[
\|\kappa^{-1}(f)(x_0)-x_0\|=\|f(x_0)\|<r,
\]
whence $\kappa^{-1}(f)(x_0)\in \Omega^\circ$, by (\ref{ball-inside}).  Since $x_0\in \Omega^\circ$ was arbitrary,
\[
    \kappa^{-1}(f)(\Omega^\circ)\subseteq \Omega^\circ
\]
is established.  Now $\kappa^{-1}(f)(\Omega)$ is compact and hence closed in $\mathbb{R}^d$. Furthermore, since ${\kappa^{-1}(f)(\partial \Omega)=\partial \Omega}$, we deduce that
\[
\kappa^{-1}(f)(\Omega^\circ)=\kappa^{-1}(f)(\Omega^\circ)\cap \Omega^\circ=\kappa^{-1}(f)(\Omega)\cap \Omega^\circ
\]
is closed in~$\Omega^\circ$.  Since $\Omega^\circ$ is connected and its non-empty subset $\kappa^{-1}(f)(\Omega^\circ)$ is both open and closed in $\Omega^\circ$, we must have $\kappa^{-1}(f)(\Omega^\circ)=\Omega^\circ$.  Being a bijective local $C^\infty$-diffeomorphism, the map $\kappa^{-1}(f)\vert_{\Omega^\circ}\colon \Omega^\circ\to \Omega^\circ$ is a $C^\infty$-diffeomorphism.  Since $f\vert_{\partial \Omega}=0$, we have $\kappa^{-1}(f)\vert_{\partial \Omega}=\id_{\partial \Omega}$, whence $\kappa^{-1}(f)(\Omega)=\Omega$.  It only remains to show that $\kappa^{-1}(f)\colon \Omega\to \Omega$ is a $C^\infty$-diffeomorphism.  By the preceding, $\kappa^{-1}(f)\colon \Omega\to \Omega$ is a continuous bijective self-map of the compact topological space~$\Omega$ an hence a homeomorphism.  Note that $\kappa^{-1}(f)'(x)=\id_{\mathbb{R}^d}+f'(x)\in \GL(\mathbb{R}^d)$ for all $x\in \Omega$.  If $k\in \N_0$ and $\kappa^{-1}(f)$ is a $C^k$-diffeomorphism, then
\[
(\kappa^{-1}(f)^{-1})'(x)=\big(\kappa^{-1}(f)'(\kappa^{-1}(f)^{-1}(x))\big)^{-1}
\]
is a $\GL(\mathbb{R}^d)$-valued $C^k$-function of $x\in \Omega$, entailing that $\kappa^{-1}(f)^{-1}$ is $C^{k+1}$.  Thus $\kappa^{-1}(f)^{-1}$ is smooth and thus $\kappa^{-1}(f)\in\Diff_{\partial}(\Omega)$.
\end{proof}

\begin{rem}
    The same reasoning may be used to show that, for each $\ell\in \N$, the set ${\{f\in C^\ell_{\partial}(\Omega ,\mathbb{R}^d) \colon \Lip(f)<1\}}$ is an open $0$-neighborhood in $C^\ell_{\partial}(\Omega ,\mathbb{R}^d)$ and $\id_\Omega+ f\in \Diff_{\partial}^{C^\ell}(\Omega)$ for all $f$ in this $0$-neighborhood.
\end{rem}

The main disadvantage of the group $\Diff_\partial (\Omega)$ in the applications we wish to investigate is that its elements fix the boundary pointwise. Depending on the application, this can be quite unnatural. Hence, we also consider the full group of $C^\infty$-diffeomorphisms for the special case of a cube $[0,1]^d \subseteq \mathbb{R}^d$.

\subsection{Diffeomorphisms of a cube}\label{sect:cornerresults}

In this section, we exclusively consider the cube $\Omega:=[0,1]^d$. Then the group $\Diff(\Omega)$ of smooth diffeomorphisms is an infinite-dimensional Lie group (cf.\ \cite{michor1980}).  We shall follow the complementary approach of~\cite{gloeckner2022}, where diffeomorphism groups of convex polytopes are discussed. Our goal is to generalize Lemma \ref{la:inchartdomain} to the  diffeomorphism group of a cube.

For the cube, the diffeomorphism group $\Diff (\Omega)$ is an infinite-dimensional Lie group modeled on the Fr\'{e}chet space $C^\infty_{\text{str}} (\Omega,\mathbb{R}^d)\subseteq C^\infty(\Omega,\mathbb{R}^d)$ of smooth vector fields
$$
f=(f_1,\ldots, f_n)\colon \Omega\to{\mathbb R}^d
$$
which are \emph{tangent to the boundary} in the sense that $f_j(x_1,\ldots,x_d)=0$ whenever $x_j=0$ or $x_j=1$ (such vector fields are also called boundary respecting, or stratified vector fields).  For example, a smooth vector field $f\colon [0,1]\to\mathbb{R}$ is tangent to the boundary if it vanishes at the vertices $0$ and $1$.  A smooth vector field $f=(f_1,f_2)\colon [0,1]^2\to {\mathbb R}^2$ is tangent to the boundary if $f_1(x,y)$ vanishes whenever $x=0$ or $x=1$, and $f_2(x,y)$ vanishes whenever $y=0$ or $y=1$.
The space of stratified vector fields is a closed vector subspace of $C^\infty (\Omega,\mathbb{R}^d)$.  As in every Lie group, the connected component $\Diff(\Omega)_0$ of the neutral element $\id_\Omega$ is an open subgroup of $\Diff(\Omega)$.  We recall from \cite{gloeckner2022}:

\begin{la}
Let $\Omega = [0,1]^d$. Then $\kappa \colon \Diff (\Omega)_0 \rightarrow C^\infty_{\mathrm{str}} (\Omega,\mathbb{R}^d), \phi \mapsto \phi -\id_\Omega$ has open image and is a chart of $\Diff (\Omega)$, when considered as a map onto the image. In particular, $\Diff_\partial (\Omega)$ is a Lie subgroup of $\Diff (\Omega)$.
\end{la}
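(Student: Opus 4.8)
The plan is to establish three things in turn: that $\kappa$ indeed lands in $C^\infty_{\mathrm{str}}(\Omega,\R^d)$, that its image is open with a smooth inverse so that $\kappa$ is a local diffeomorphism at the identity, and that left-translation then propagates this to a global chart on the whole identity component; the subgroup statement will drop out by intersecting with a closed subspace.

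First I would check well-definedness. A key structural fact (part of the polytope theory of \cite{gloeckner2022}) is that any $\phi \in \Diff(\Omega)$ preserves the corner stratification of the cube, since the local model at a corner is a diffeomorphism invariant. The induced action on the finite set of faces gives a continuous, hence locally constant, homomorphism of $\Diff(\Omega)$ into the symmetry group of the cube, so passing to $\Diff(\Omega)_0$ rules out any permutation and each face $\{x_j = 0\}$, $\{x_j = 1\}$ is mapped into itself. Writing $\phi = (\phi_1,\dots,\phi_d)$, this forces $\phi_j(x) = 0$ whenever $x_j = 0$ and $\phi_j(x) = 1$ whenever $x_j = 1$; hence $(\phi-\id)_j$ vanishes on those faces, i.e. $\kappa(\phi) \in C^\infty_{\mathrm{str}}(\Omega,\R^d)$.

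The heart of the matter is showing the image is open and the inverse $f \mapsto \id + f$ makes sense. Here I would adapt the argument of \Cref{la:inchartdomain} to the stratified (tangent-to-boundary) setting. For $f \in C^\infty_{\mathrm{str}}$ with $\|f\|_{\infty,\op} < 1$, the injectivity and local-diffeomorphism parts of that proof carry over unchanged, since they use only $\Lip(f) < 1$ and $\id_{\R^d} + f'(x) \in \GL(\R^d)$. The genuinely new ingredient is the boundary behaviour: because $f$ is only tangent to, not vanishing on, $\partial\Omega$, I cannot invoke $(\id+f)|_{\partial\Omega} = \id$. Instead, the tangency condition $f_j = 0$ on $\{x_j \in \{0,1\}\}$ shows $\id + f$ maps each face into itself, and I would run the \Cref{la:inchartdomain} argument by induction on the skeleton: it applies verbatim on each closed face, which is a lower-dimensional cube carrying its own tangent vector field $f|_{\text{face}}$, giving that $\id + f$ restricts to a diffeomorphism of every stratum and therefore of $\Omega$. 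This exhibits a $0$-neighborhood in $C^\infty_{\mathrm{str}}$ lying in the image; since $\kappa$ is smooth and the $\Diff(\Omega)$-manifold structure of \cite{gloeckner2022} is translation-invariant, conjugating by the smooth left translation $L_\phi$ transfers the local-diffeomorphism property from $\id$ to every $\phi \in \Diff(\Omega)_0$, and global injectivity of $\kappa$ upgrades this to a diffeomorphism onto an open image, i.e. a global chart.

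Finally, for the subgroup claim I would observe that $C^\infty_\partial(\Omega,\R^d)$ is a closed vector subspace of $C^\infty_{\mathrm{str}}(\Omega,\R^d)$ and that $\kappa(\Diff_\partial(\Omega)) = \kappa(\Diff(\Omega)_0) \cap C^\infty_\partial$; the chart $\kappa$ thus carries $\Diff_\partial(\Omega)$ onto the intersection of an open set with a closed subspace, which is exactly a submanifold chart, making $\Diff_\partial(\Omega)$ a closed Lie subgroup. I expect the induction over the corner strata in the third paragraph to be the main obstacle: one must verify that the smallness condition $\|f\|_{\infty,\op} < 1$ restricts consistently to every face and that the face-wise diffeomorphisms glue to a single global diffeomorphism of the cube across the lower-dimensional edges and corners where the faces meet.
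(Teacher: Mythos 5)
You should know at the outset that the paper does not prove this lemma at all: it is recalled verbatim from \cite{gloeckner2022} (``We recall from \cite{gloeckner2022}:''), so the only in-paper material to measure your attempt against is \Cref{la:inchartdomain} and the unnumbered proposition immediately following the lemma, which proves $\mathcal{V}_1 \subseteq \kappa(\Diff(\Omega)_0)$ by induction on $d$ over the faces of the cube. Your overall architecture is sensible and your middle paragraph reproduces the skeleton of that induction; your first paragraph (the action on the finite set of faces is locally constant, hence trivial on the identity component, forcing $\phi_j(x)=0$ on $\{x_j=0\}$ and $\phi_j(x)=1$ on $\{x_j=1\}$) is a correct and standard way to see that $\kappa$ lands in $C^\infty_{\mathrm{str}}(\Omega,\R^d)$, and your use of left translations plus continuity of composition to spread the chart property from $\id$ to all of $\Diff(\Omega)_0$ is the right mechanism, granted the Lie group structure from \cite{gloeckner2022} (one still has to check that $\phi^{-1}\circ g-\id_\Omega$ is again stratified, which follows since $g-\id_\Omega$ stratified implies $g$ maps each face into itself).

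There are, however, two genuine gaps. First, the claim that the argument of \Cref{la:inchartdomain} ``applies verbatim on each closed face \dots{} giving that $\id+f$ restricts to a diffeomorphism of every stratum and therefore of $\Omega$'' breaks exactly at the word ``therefore''. The interior step of \Cref{la:inchartdomain} picks a nearest boundary point $y_0$ to $x_0\in\Omega^\circ$ and exploits $f(y_0)=0$ to conclude $\|f(x_0)\|<r$ and hence $(\id_\Omega+f)(x_0)\in\Omega^\circ$; for a field that is merely tangent to the boundary, $f$ does \emph{not} vanish on $\partial\Omega$, and this ball argument is unavailable. The paper's proposition on $\mathcal{V}_1$ replaces it by a coordinatewise argument: for each $j$, the minimum and maximum of $x\mapsto x_j+f_j(x)$ over $\Omega$ cannot be attained in the interior (openness of $\id_\Omega+f$ on $\Omega^\circ$), hence are attained on $\partial\Omega$, which is mapped into itself, forcing $0\leq x_j+f_j(x)\leq 1$; only then does the clopen/connectedness argument yield $(\id_\Omega+f)(\Omega^\circ)=\Omega^\circ$. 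You flag the passage from face-wise diffeomorphisms to the full cube as ``the main obstacle'', which is honest, but flagging it is not filling it: without this projection step your induction does not close. Second, your identity $\kappa(\Diff_\partial(\Omega))=\kappa(\Diff(\Omega)_0)\cap C^\infty_\partial(\Omega,\R^d)$ silently assumes $\Diff_\partial(\Omega)\subseteq\Diff(\Omega)_0$, which you never justify and which is a delicate question (isotopy classes of boundary-fixing diffeomorphisms of disks are nontrivial in high dimensions, so connectivity should not be waved at). The Lie-subgroup conclusion needs only the weaker, trivially true statement $\kappa(\Diff_\partial(\Omega)\cap\Diff(\Omega)_0)=\kappa(\Diff(\Omega)_0)\cap C^\infty_\partial(\Omega,\R^d)$ (as $\phi-\id_\Omega$ vanishes on $\partial\Omega$ if and only if $\phi$ fixes $\partial\Omega$ pointwise), which gives a submanifold chart at the identity; left translations by arbitrary elements of $\Diff_\partial(\Omega)$ then propagate the submanifold property to the whole subgroup. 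With those two repairs your outline becomes a proof along the same lines as \cite{gloeckner2022}.
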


Again, it is not straightforward to determine the image of the chart $\kappa$.  But we can extend the sufficient criterion encountered for boundary-fixing diffeomorphisms.  To this end, we consider the open subset
$$\mathcal{V}_1 = \{f \in C^\infty_{\text{str}}(\Omega,\mathbb{R}^d) : \text{Lip} (f) < 1\}.$$
We prove the following (see \cite[Proposition 1.5]{gloeckner2022} for a generalized version):
\begin{prop}
For every $f \in \mathcal{V}_1$, the map $f+\id_\Omega$ is contained in $\Diff (\Omega)$. Hence $\mathcal{V}_1 \subseteq \kappa (\Diff(\Omega)_0)$.
\end{prop}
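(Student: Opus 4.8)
The plan is to follow the architecture of the proof of \Cref{la:inchartdomain}, replacing the single geometric estimate that used vanishing of $f$ on \emph{all} of $\partial\Omega$ by an argument adapted to the weaker, stratified hypothesis. Write $\phi:=\id_\Omega+f$ and fix a norm with $|v_j|\le\|v\|$ and $\|e_j\|=1$ for each coordinate (e.g.\ the Euclidean or maximum norm), so that $|\partial_jf_j(x)|\le\Lip(f)<1$ for every $j$ and every $x\in\Omega$. As in \Cref{la:inchartdomain}, the condition $\Lip(f)<1$ gives at once that $\phi$ is injective (the same contradiction $\|x-y\|\le\Lip(f)\|x-y\|<\|x-y\|$) and that $\phi'(x)=\id_{\R^d}+f'(x)\in\GL(\R^d)$ for all $x\in\Omega$, so $\phi$ restricts to a local $C^\infty$-diffeomorphism, in particular an open map, on $\Omega^\circ$.

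The decisive new step is to show $\phi(\Omega)\sub\Omega$ together with $\phi(\Omega^\circ)\sub\Omega^\circ$ and $\phi(\partial\Omega)\sub\partial\Omega$, which now cannot rely on the ball estimate of \Cref{la:inchartdomain} because $f$ need not vanish on a whole face. Instead I would exploit the stratified structure coordinatewise. Fix $x\in\Omega$ and an index $j$, freeze all coordinates but the $j$-th, and consider $g(t):=t+f_j(x_1,\dots,x_{j-1},t,x_{j+1},\dots,x_d)$ on $t\in[0,1]$. Since $f_j$ vanishes whenever its $j$-th argument equals $0$ or $1$, we have $g(0)=0$ and $g(1)=1$; since $g'(t)=1+\partial_jf_j>0$, the function $g$ is strictly increasing. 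Hence $g$ maps $[0,1]$ into $[0,1]$ and $(0,1)$ into $(0,1)$, giving $\phi_j(x)\in[0,1]$ in general and $\phi_j(x)\in(0,1)$ when $x_j\in(0,1)$. Running over all $j$ yields $\phi(\Omega)\sub\Omega$ and $\phi(\Omega^\circ)\sub\Omega^\circ$; and if $x\in\partial\Omega$, some $x_j\in\{0,1\}$ forces $\phi_j(x)=x_j\in\{0,1\}$, so $\phi(\partial\Omega)\sub\partial\Omega$.

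With these inclusions in hand, the surjectivity and smoothness arguments run exactly as in \Cref{la:inchartdomain}. Because $\phi(\partial\Omega)\sub\partial\Omega$, no boundary point maps into $\Omega^\circ$, so $\phi(\Omega^\circ)=\phi(\Omega)\cap\Omega^\circ$ is closed in $\Omega^\circ$; it is also open and non-empty, and $\Omega^\circ$ is connected, whence $\phi(\Omega^\circ)=\Omega^\circ$. As $\phi(\Omega)$ is compact, hence closed, and contains the dense subset $\Omega^\circ$, it equals $\Omega$, so $\phi\colon\Omega\to\Omega$ is a continuous bijection of a compact Hausdorff space and therefore a homeomorphism. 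Finally $\phi'(x)\in\GL(\R^d)$ for all $x\in\Omega$ lets me bootstrap regularity of $\phi^{-1}$ via $(\phi^{-1})'(x)=(\phi'(\phi^{-1}(x)))^{-1}$ exactly as before, so $\phi\in\Diff(\Omega)$.

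It remains to place $\phi$ in the identity component. For every $t\in[0,1]$ the vector field $tf$ is again stratified with $\Lip(tf)\le\Lip(f)<1$, so $tf\in\mathcal{V}_1$ and, by what was just shown, $\id_\Omega+tf\in\Diff(\Omega)$. The curve $t\mapsto\id_\Omega+tf$ is continuous into $\Diff(\Omega)$ (it is continuous into $C^\infty(\Omega,\R^d)$ and $\Diff(\Omega)$ carries the subspace topology), and $\Diff(\Omega)_0$ is clopen as the identity component of a topological group; an open--closed argument on the connected parameter interval $[0,1]$ then shows the whole segment, in particular $\phi$, lies in $\Diff(\Omega)_0$. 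Thus $f=\kappa(\phi)\in\kappa(\Diff(\Omega)_0)$. The main obstacle is precisely the middle step: transferring the invariance $\phi(\Omega)\sub\Omega$ and the interior/boundary dichotomy from the pointwise boundary-fixing setting to the stratified one, which is why the coordinatewise monotonicity replacing the ball estimate is the heart of the argument.
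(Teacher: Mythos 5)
Your proof is correct, and it takes a genuinely different route from the paper's. The paper argues by induction on the dimension $d$: each $(d-1)$-dimensional face of the cube is a translated copy of $[0,1]^{d-1}$, the restriction of $f$ to a face is again stratified with Lipschitz constant $<1$, so induction gives $(\id_\Omega+f)(\partial\Omega)=\partial\Omega$; the inclusion $(\id_\Omega+f)(\Omega)\subseteq\Omega$ is then obtained by a min/max argument on the coordinate projections, using openness on the interior to push the extrema of $\mathrm{pr}_j\circ(\id_\Omega+f)$ to the boundary, where boundary preservation forces $m_j\geq 0$ and the maximum $\leq 1$. You replace both of these steps by the one-dimensional slice argument: freezing all coordinates but the $j$-th, the stratified condition gives $g(0)=0$ and $g(1)=1$, and $g'(t)=1+\partial_j f_j>0$ makes $g$ strictly increasing, which yields $\phi(\Omega)\subseteq\Omega$, $\phi(\Omega^\circ)\subseteq\Omega^\circ$ and $\phi(\partial\Omega)\subseteq\partial\Omega$ in one stroke, with no induction on $d$; your open--closed argument then recovers surjectivity, including surjectivity onto the boundary, which the paper instead gets directly from the face induction. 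You also make explicit the membership in $\Diff(\Omega)_0$ via the path $t\mapsto\id_\Omega+tf$ through $\mathcal{V}_1$, a point the paper leaves implicit in its ``Hence''. One caveat costs you a little generality: your slice argument genuinely needs the coordinate compatibility $|v_j|\leq\|v\|$ and $\|e_j\|=1$ that you impose, whereas the paper fixes an \emph{arbitrary} norm on $\R^d$ before defining $\mathcal{V}_1$. For a general norm the bound $|\partial_j f_j(x)|\leq\Lip(f)$ can fail --- one only gets $|\partial_j f_j(x)|\leq\|\mathrm{pr}_j\|_{\op}\,\|e_j\|\,\Lip(f)$, and the product $\|\mathrm{pr}_j\|_{\op}\|e_j\|$ can exceed $1$ (e.g.\ for a norm whose unit ball is a thin ellipse tilted against the axes), so monotonicity of $g$ is not guaranteed --- while the paper's induction is norm-independent, since restriction of $f$ to a face does not increase its Lipschitz constant. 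Thus your argument proves the proposition for $\mathcal{V}_1$ defined with respect to, say, the Euclidean or maximum norm, which covers everything used in the paper's numerics, but not verbatim for the arbitrary fixed norm of the statement; all remaining steps (injectivity from $\Lip(f)<1$, invertibility of $\phi'(x)=\id_{\R^d}+f'(x)$, the connectedness argument on $\Omega^\circ$, compactness giving a homeomorphism, and the $C^k$ bootstrap $(\phi^{-1})'(x)=\bigl(\phi'(\phi^{-1}(x))\bigr)^{-1}$ for smoothness of the inverse) coincide with the paper's proof of Lemma~\ref{la:inchartdomain}, exactly as you indicate.
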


\begin{proof}
The proof is by induction on $d\in\N$.
If $d=1$, then $C^\infty_{\text{str}}(\Omega,\mathbb{R})=C^\infty_\partial(\Omega,\mathbb{R})$ and the assertion follows from Lemma~\ref{la:inchartdomain}.
Now assume that $d\geq 2$.  Fix $f \in \mathcal{V}_1$. Arguing as in the proof of Lemma \ref{la:inchartdomain}, we see that $\id_\Omega +f$ is injective.
Note that each $(d-1)$-dimensional face~$F$ of $\Omega$ is diffeomorphic to $[0,1]^{d-1}$ via a translation; for example, the upper face $[0,1]\times \{1\}$ of the square $[0,1]^2$ can be translated to $[0,1]\times \{0\} \cong [0,1]$.
Hence $(\id_\Omega+f)(F)=(\id_F+f\vert_F)(F)=F$ by induction.
As the boundary $\partial \Omega$ is the union of the $(d-1)$-dimensional faces, we deduce that $(\id_\Omega+f)(\partial\Omega)=\partial \Omega$.
Thus
\eqref{eq_boundarypreservation} holds.
Next, we note that $\id_\Omega+f$ has invertible derivative $f'(x)$ at each $x \in \Omega^\circ$, whence it is an open map on the interior~$\Omega^\circ$.
To see that $(\id_\Omega + f)(\Omega) \subseteq \Omega$, consider the projection $\text{pr}_j$ onto the $j$th component for $j \in \{1,\dots , d\}$.
By compactness of $\Omega$, the minimum $m_j := \min_{y \in \Omega} (\text{pr}\circ (\id_\Omega + f)(y))$ exists.
Since $\id_{\Omega^\circ} +f\vert_{\Omega^\circ}$ is an open map, the minimum cannot be attained on the interior of the cube.
Thus $m_j$ is attained on the boundary~$\partial \Omega$.
As the boundary is mapped to itself and does not contain points with negative coordinates, we conclude that $m_j \geq 0$ for every $j$, whence $x_j+f_j(x)\geq 0$ for each $x=(x_1,\ldots,x_d)\in \Omega$.
Considering the maximum instead, we conclude that also $x_j+f_j(x)\leq 1$, whence $x+f(x)\in\Omega$ for all $x\in\Omega$.
Thus $\id_\Omega + f$ maps the sets $\Omega$ and $\partial \Omega$ into themselves, respectively.
Arguing as in the proof of Lemma \ref{la:inchartdomain}, we see that $(\id_\Omega + f)(\Omega^\circ)$ is both open and closed in the connected set $\Omega^\circ$, whence $(\id_\Omega+f)(\Omega^\circ)=\Omega^\circ$.
Then $(\id_\Omega+f)(\Omega)=(\id_\Omega+f)(\Omega^\circ) \cup(\id_\Omega +f)(\partial \Omega)=\Omega^\circ\cup\partial\Omega=\Omega$, showing that $\id_\Omega+f$ is surjective as a map to $\Omega$ and hence bijective.
The proof can now be completed as in the proof of Lemma \ref{la:inchartdomain}, and we deduce that $\id_\Omega + f \in \Diff (\Omega)$.
\end{proof}

%
We have the following (strict) inclusions concerning the sets we are working with

\begin{tikzcd}
\mathcal{U}_1 
\arrow[r] \arrow[d] &
\mathcal{V}_1 \arrow[d] & \\
 \kappa(\mathrm{Diff}_\partial (\Omega))\arrow[r] &
 \kappa (\mathrm{Diff} (\Omega)_0) \arrow[r] &
 C^\infty (\Omega, \mathbb{R}^d).
\end{tikzcd}

%

In practice, we work with vector fields to approximate diffeomorphisms. As the algebra of vector fields is infinite dimensional, we have to replace it by a finite-dimensional subspace. To increase the supply of diffeomorphisms we can approximate, our method incorporates \emph{compositions} of diffeomorphisms. Since $\kappa^{-1}(\mathcal{U}_1)$ and $\kappa^{-1}(\mathcal{V}_1)$ are identity neighborhoods in the respective diffeomorphism groups, they generate the group,\footnote{We are referring to the connected component of the identity of the respective diffeomorphism group here.}
i.e.\ every diffeomorphism can be written as a finite (but maybe arbitrarily long) composition of elements near the identity. 
Passing to the intersection with a finite-dimensional subspace, the generating property is lost, but compositions will nevertheless yield a much broader spectrum of diffeomorphisms which can be reached.

\begin{rem}
These observations have been exploited in the
\emph{boundaryless} case for example in \cite{agrachev2009}. Since $\Diff (\Omega)_0$ is simple in this case, there are several characterizations available for a subset of vector fields to generate a sufficiently rich subgroup (see e.g.\ \cite{agrachev2009,agrachev2021} for a treatment of flows in the framework of sub-Riemannian geometry on manifolds \emph{without} boundary). Note that it is not clear how these techniques (e.g.\ sub-Riemannian geometry) can be generalized to manifolds with boundary.
\end{rem} 
Since our method relies on compositions of multiple diffeomorphisms, the next section provides insight and estimates into such compositions.

\section{Estimates concerning multiple compositions}\label{sect:multcomp}

If we consider compositions $\phi:=\phi_L\circ\cdots\circ \phi_1$ of diffeomorphisms $\phi_1,\ldots,\phi_L\in \Diff_{\partial}(\Omega)$ for a large number~$L$ of factors, we'd like to be able to control the distance $\|\phi-\id_\Omega\|_{C^k}$ of the composite from the identity $\id_\Omega$ in terms of the distances $\|\phi_1-\id_\Omega\|_{C^k},\ldots,\|\phi_L-\id_\Omega\|_{C^k}$ (where the norms $\|\cdot\|_{C^k}$ are as in \eqref{the-norms-ck}).  We show that such an estimate is possible using a closed formula which applies to all~$L$. 
Let $\Omega$ be $[0,1]^d$, or a compact convex subset of $\mathbb{R}^d$ with dense interior.
For certain positive integers $M_0\leq M_1\leq M_2\leq\cdots$ which can be calculated recursively, we have:
\begin{prop}\label{est-iter}
For all $k\in \N_0$ and all $L\in \N$,
\begin{eqnarray}
\lefteqn{\|((\id_\Omega+f_L)\circ \cdots \circ (\id_\Omega+f_1))-\id_\Omega\|_{C^k}}\qquad\qquad
\notag \\[.7mm]
&\leq & M_k\, e^{k\sum_{j=1}^L\|f_j\|_{C^k}}\,
(\|f_1\|_{C^k}+\cdots+\|f_L\|_{C^k})\label{est-it-1}
\end{eqnarray}
holds for all $f_1,\ldots, f_L\in \Diff_{\partial}(\Omega)-\id_\Omega$
such that $\|f_1\|_{C^k}+\cdots+\|f_L\|_{C^k}\leq 1$.
Notably,
\begin{equation}\label{est-it-2}
\|((\id_\Omega+f_L)\circ \cdots \circ (\id_\Omega+f_1))-\id_\Omega\|_{C^k}
\leq M_k\, e^k (\|f_1\|_{C^k}+\cdots+\|f_L\|_{C^k}).
\end{equation}
\end{prop}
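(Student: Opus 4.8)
The plan is to argue by induction on the number $L$ of factors, reducing the whole estimate to the control of a single composition. Write $\Phi_0:=\id_\Omega$ and $\Phi_\ell:=(\id_\Omega+f_\ell)\circ\Phi_{\ell-1}$ for $\ell=1,\ldots,L$, and set $g_\ell:=\Phi_\ell-\id_\Omega$. Since each $\id_\Omega+f_j$ lies in $\Diff_\partial(\Omega)$ and hence maps $\Omega$ into $\Omega$, every composition $f_\ell\circ\Phi_{\ell-1}\colon\Omega\to\R^d$ is well defined and smooth, and one has the fundamental recursion
\[
g_\ell = g_{\ell-1} + f_\ell\circ\Phi_{\ell-1}.
\]
Taking $C^0$-norms gives $\|g_\ell\|_{C^0}\le\|g_{\ell-1}\|_{C^0}+\|f_\ell\|_{C^0}$, so $\|g_L\|_{C^0}\le\sum_{j=1}^L\|f_j\|_{C^0}$; this is the base case $k=0$, with $M_0=1$ and $e^{0}=1$. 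For $k\ge1$ the task splits into two parts: estimating the $C^k$-norm of the single composition $f_\ell\circ\Phi_{\ell-1}$, and controlling the accumulated growth of the derivatives of $\Phi_{\ell-1}$ that enter those bounds. Throughout I abbreviate $a_j:=\|f_j\|_{C^k}$ and $S_\ell:=\sum_{j=1}^\ell a_j$.

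First I would record the growth of the first derivative, which is the source of the exponential factor. Differentiating the recursion gives $g_\ell' = g_{\ell-1}' + (f_\ell'\circ\Phi_{\ell-1})\,\Phi_{\ell-1}'$, and with $b_\ell:=\|g_\ell'\|_{C^0}$ and $\|\Phi_{\ell-1}'\|_{C^0}\le 1+b_{\ell-1}$ one obtains
\[
1+b_\ell \le (1+a_\ell)(1+b_{\ell-1}),
\qquad\text{hence}\qquad
\|\Phi_\ell'\|_{C^0}\le \prod_{j=1}^\ell (1+a_j)\le e^{S_\ell}.
\]
The elementary inequality $e^x-1\le x\,e^x$ already yields $b_\ell\le S_\ell\,e^{S_\ell}$, which is the claimed form for $k=1$; more importantly, $\|\Phi_\ell'\|_{C^0}^m\le e^{mS_\ell}$ for $m\le k$ is what produces the exponent $k$ in \eqref{est-it-1}.

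For the higher derivatives I would apply the Faà di Bruno formula to each composition $f_\ell\circ\Phi_{\ell-1}$. Its top-order term $(f_\ell'\circ\Phi_{\ell-1})\cdot\Phi_{\ell-1}^{(k)}=(f_\ell'\circ\Phi_{\ell-1})\cdot g_{\ell-1}^{(k)}$ reproduces the highest derivative $g_{\ell-1}^{(k)}$ with a coefficient bounded by $a_\ell$, so that — exactly as for the first derivative — the quantity $c_\ell:=\|g_\ell^{(k)}\|_{C^0}$ satisfies a linear recursion $c_\ell\le(1+a_\ell)c_{\ell-1}+(\text{source}_\ell)$. The source is dominated by the partition term $(f_\ell^{(k)}\circ\Phi_{\ell-1})\cdot(\Phi_{\ell-1}')^{\otimes k}$, of norm at most $a_\ell\,\|\Phi_{\ell-1}'\|_{C^0}^k\le a_\ell\,e^{kS_{\ell-1}}$ by the previous paragraph, while every remaining partition term involves only derivatives $g_{\ell-1}^{(i)}$ of order $i<k$ and is therefore controlled by the inductive hypothesis at lower orders. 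Solving this recursion ($\prod_j(1+a_j)\le e^{S_L}$ for the homogeneous part, together with the accumulation of the sources) yields a bound of the form $M_k\,e^{kS_L}\,S_L$, where the integers $M_k$ are defined by a recursion involving $M_0,\ldots,M_{k-1}$ and the total weight of the Faà di Bruno coefficients. Finally, \eqref{est-it-2} follows at once from \eqref{est-it-1} and the hypothesis $S_L=\|f_1\|_{C^k}+\cdots+\|f_L\|_{C^k}\le 1$, since then $e^{kS_L}\le e^k$.

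The main obstacle is the combinatorial bookkeeping of the last step: one must organize the partition sum so as to simultaneously isolate the single dominant term carrying the clean factor $e^{kS_L}$, absorb the term $(f_\ell'\circ\Phi_{\ell-1})\cdot g_{\ell-1}^{(k)}$ into the $(1+a_\ell)$-multiplier of the linear recursion, and route all genuinely lower-order contributions into the inductive hypotheses at orders $<k$. Making this nested induction — outer on $L$, inner on $k$ — close with a closed recursion for $M_k$ that is \emph{uniform in $L$} is the delicate point; the multiplicative-to-additive conversion via $e^x-1\le x\,e^x$, together with the constraint $S_L\le 1$ keeping the factors $(1+\|g_{\ell-1}\|_{C^k})$ uniformly bounded, is exactly what allows a single formula valid for all $L$.
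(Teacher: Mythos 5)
Your proposal is correct and follows essentially the same route as the paper's proof: the chart-level recursion $g_\ell=g_{\ell-1}+f_\ell\circ\Phi_{\ell-1}$ is exactly the paper's $*$-operation, the $k=0$ and $k=1$ cases are handled by the same telescoping and product-of-$(1+a_j)$ bounds, and for $k\geq 2$ you apply Fa\'{a} di Bruno in the same way, isolating the linear top-order term $(f_\ell'\circ\Phi_{\ell-1})\,g_{\ell-1}^{(k)}$ into a $(1+a_\ell)$-multiplier, bounding the remaining partition terms via the inductive hypothesis at lower orders (where, as you note, $S_L\leq 1$ is used), and closing the same partition recursion \eqref{the-C-k} for $M_k$. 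The recursion $c_\ell\leq(1+a_\ell)c_{\ell-1}+M_k a_\ell e^{kS_{\ell-1}}$ indeed solves to $M_k e^{kS_\ell}S_\ell$ uniformly in $L$, matching the paper's nested induction on $k$ and $L$.
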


Here $M_0:=M_1:=1$. For $k\in \N$ and $j\in \{1,\ldots, k\}$, let $P_{k,j}$ be the set
of all partitions $P=\{I_1,\ldots, I_j\}$ of $\{1,\ldots, k\}$ into disjoint,
non-empty subsets $I_1,\ldots, I_j$ of $\{1,\ldots, k\}$.
Then
\begin{equation}\label{the-C-k}
M_k:=\sum_{j=2}^k\sum_{P\in P_{k,j}}M_{\lvert I_1 \rvert}\cdots M_{\lvert I_j \rvert}
\;\; \mbox{for $\,k\geq 2$.}
\end{equation}
If $n_1\geq n_2\geq\cdots\geq n_j\geq 1$ are integers with $n_1+\cdots+n_j=k$, write
$P(n_1,\ldots, n_j)\subseteq P_j$ for the set of all partitions of $\{1,\ldots, k\}$
into subsets $I_1,\ldots, I_j$ of cardinality $\lvert I_a \rvert=n_a$ for $a\in\{1,\ldots, j\}$.
Then
\[
M_k\, =\, \sum_{j=2}^k\, \sum_{n_1+\cdots+n_j=k}
\lvert
P(n_1,\ldots, n_j)
\rvert
\, M_{n_1}\cdots M_{n_j},
\]
using a summation over all $(n_1,\ldots,n_j)\in\N^j$ with $n_1\geq n_2\geq\cdots\geq n_j$ and $n_1+\cdots+n_j=k$.\\[2.3mm]
For small $k$, the numbers $M_k$ can easily be calculated by hand; their values are as follows:\\[3.8mm]
\hspace*{-1.5mm}
\resizebox{\textwidth}{!} {%
\begin{tabular}{||c||c|c|c|c|c|c|c|c|c|c||}\hline\hline
$k$ & 1 &2 &3 &4 &5 &6 &7 &8 &9 & 10\\ \hline
$M_k$ & 1 & 1 & 4 & 26 & 236 &
2752 & 39208 & 660032 & 12818912 & 282137824 \\ \hline\hline
\end{tabular}
}\\[3.8mm]
The constants may not be optimal. Note that, for every $k\geq 3$, the set $\{1,\ldots,k\}$ admits $k$ partitions into a singleton and
a set with $k-1$ elements. Thus $M_k\geq k M_1M_{k-1}\geq M_{k-1}$.
\begin{rem}
The numbers $M_k$ for $k\in\N$ are well known in combinatorics; they arise in a classical enumeration problem known as Schröder's Fourth
Problem (cf.\ \cite{riordan1976,schroeder1870}). A longer list can be found in the on-line encyclopedia of integer sequences (OEIS), sequence A000311.
\end{rem}

\begin{rem}\label{firem}
For $m\in \N$, the assertions of Proposition~\ref{est-iter} remain valid for all $k\in\{0,1,\ldots,m\}$
if $f_1,\ldots, f_L\in \Diff^{C^m}_{\partial}(\Omega)-\id_\Omega\subseteq C^m_{\partial}(\Omega ,\mathbb{R}^d)$
(the proof carries over).
\end{rem}
In the following,
$\mathcal{V} := \kappa(\Diff_\partial (\Omega)) = \Diff_{\partial}(\Omega)-\id_\Omega$,
which is an open $0$-neighborhood in $C^\infty_{\partial}(\Omega ,\mathbb{R}^d)$.
\begin{rem}\label{secrem}
The proof will show that the condition $\|f_1\|_{C^k}+\cdots+\|f_L\|_{C^k}\leq 1$
in Proposition~\ref{est-iter} is unnecessary for the validity of~(\ref{est-it-1})
in the cases $k=0$ and $k=1$. Actually,
\begin{equation}\label{onlycts}
\|((\id_\Omega+f_L)\circ \cdots \circ (\id_\Omega+f_1))-\id_\Omega\|_\infty\leq
\|f_1\|_\infty+\cdots+\|f_L\|_\infty
\end{equation}
for all $L\in \N$ and $f_1,\ldots, f_L\in \mathcal{V}$. We shall also see that
\begin{alignat}{8}
    \Lip(((\id_\Omega+f_L)\circ & \cdots\circ (\id_\Omega+f_1))-\id_\Omega)\\
    \leq& (1+\Lip(f_1))\cdots(1+\Lip(f_L))-1 \label{onlylip1}\\
    \leq& e^{\Lip(f_1)+\cdots+\Lip(f_L)}-1 \label{onlylip2} \\
    \leq& e^{\Lip(f_1)+\cdots+\Lip(f_L)}(\Lip(f_1)+\cdots+\Lip(f_L))\label{onlylip3}
\end{alignat}
%
for all $L\in \N$ and $f_1,\ldots, f_L\in  \mathcal{V}$.
\end{rem}
\begin{rem}
In the special case $\Omega=[0,1]^d$, the conclusion of Proposition~\ref{est-iter} remains valid if $\Diff_\partial(\Omega)$ is replaced with $\Diff(\Omega)_0$ and vector fields vanishing on the boundary are replaced with vector fields which are tangent to the boundary.  The same comment applies to Remarks~\ref{firem} and \ref{secrem}.
\end{rem}

The following notations and facts are useful for the proof of Proposition~\ref{est-iter}.
\begin{numba}
If $(E,\|\cdot\|_E)$ is a normed space, we write $\wb{B}_r^E(x):=\{y\in E\colon \|y-x\|_E\leq r\}$ for the closed ball of radius $r>0$ around $x\in E$. If also $(F,\|\cdot\|_F)$ is a normed space, $k\in\N$ and $\beta\colon E^k\to F$ a continuous $k$-linear map, we define
\[
\|\beta\|_{\op}:=\sup\{\|\beta(x_1,\ldots, x_k)\|_F\colon x_1,\ldots,x_k\in\wb{B}_1^E(0)\}.
\]
Then $\|\beta(x_1,\ldots,x_k)\|_F\leq\|\beta\|_{\op}\|x_1\|_E\cdots\|x_k\|_E$ for all $x_1,\ldots, x_k\in E$.  If $\Omega$ is a compact topological space and $g\colon \Omega\to (\cL^k(E,F),\|\cdot\|_{\op})$ a continuous map to the space of continuous $k$-linear maps, we let
\[
\|g\|_{\infty,\op}:=\sup_{x\in \Omega}\|g(x)\|_{\op}\in [0,\infty[\, .
\]
\end{numba}
\begin{numba}
Let $E:=\mathbb{R}^d$, endowed with a fixed norm $\|\cdot\|$, and $\Omega \subseteq E$ be a compact convex subset with non-empty interior.  For $f\in C^\infty(\Omega,E)$
and $k\in\N_0$, let $d^kf\colon \Omega\times E^k\to E$
be the continuous map such that, for all
$x\in \Omega^\circ$ and $y_1,\ldots, y_k\in E$,
\[
d^kf(x,y_1,\ldots, y_k):=(D_{y_k}\cdots D_{y_1}f)(x)
\]
is the iterated directional derivative at $x$
in the directions $y_1,\ldots, y_k$.
Recall that the topology on $C^\infty_{\partial}(\Omega ,E)$ is initial with respect to the mappings $C^\infty_{\partial}(\Omega ,E)\to C(\Omega\times E^k,E)$, $f\mapsto d^kf$ for $k\in\N_0$, using the topology of compact convergence on spaces of continuous functions.  Thus, the seminorms
\[
f\mapsto \sup\{\|d^kf(z)\|_E\colon z\in K\}
\]
define the locally convex vector topology on $C^\infty_{\partial}(\Omega ,E)$, for $k\in \N_0$ and compact subsets $K\subseteq \Omega \times E^k$.  Abbreviate $f^{(k)}(x):=d^kf(x,\cdot)\in \cL^k(E,E)$ for $k\in \N$ and $x\in \Omega$. Then
\[
f\mapsto \|f^{(k)}\|_{\infty,\op}=\sup\{\|d^kf(z)\|_E \colon z\in \Omega\times\wb{B}^E_1(0)^k\}
\]
is a continuous seminorm on $C^\infty_{\partial}(\Omega ,E)$. Together with the supremum norm $\|\cdot\|_\infty$, the latter seminorms define the locally convex topological
vector space topology of $C^\infty_{\partial}(\Omega ,E)$.  In fact, for each $k\in \N$ and compact subset $K\subseteq \Omega\times E^k$, we have $K\subseteq \Omega\times \wb{B}^E_r(0)^k$ for some $r>0$, whence
\[
\sup\{\|d^kf(x,z)\|_E\colon z\in K\}\leq r^k \|f^{(k)}\|_{\infty,\op}.
\]
Hence, the vector topology on $C^\infty_{\partial}(\Omega ,E)$ is also defined by the sequence of norms $(\|\cdot\|_{C^k})_{k\in\N_0}$ with $\|\cdot\|_{C^0}:=\|\cdot\|_\infty$ and 
\begin{equation}\label{the-norms-ck}
\|f\|_{C^k}:=\max\{\|f\|_\infty,\|f^{(1)}\|_{\infty,\op},\ldots, \|f^{(k)}\|_{\infty,\op}\}.
\end{equation}
\end{numba}
The following formula actually holds
for locally convex spaces~$X$, $Y$, and~$Z$.
\begin{numba}(Fa\'{a} di Bruno's Formula)
Let $X$, $Y$, and $Z$ be finite-dimensional real vector spaces, $U\subseteq X$ and $V\subseteq Y$ be locally convex subsets with dense interior, and $k\in \N$.  Let $g\colon U\to Y$ and $f\colon V\to Z$ be $C^k$-maps such that $g(U)\subseteq V$.  Then
\[
d^k(f\circ g)(x,y_1,\ldots, y_k)
=\sum_{j=1}^k
\sum_{P\in P_{k,j}}
d^jf(d^{\lvert I_1 \rvert}g(x,y_{I_1}),\ldots,d^{\lvert I_j \rvert}g(x,y_{I_j}))
\]
for all $x\in U$ and $y_1,\ldots, y_k\in E$, where $P=\{I_1,\ldots,I_j\}$ and
\[
y_J:=(y_{i_1},\ldots, y_{i_\ell})
\]
for each subset $J\subseteq \{1,\ldots,k\}$ of the form $J=\{i_1,\ldots,i_\ell\}$ with $i_1<\cdots<i_\ell$ (see \cite[Theorem 1.3.18 and Remark 1.4.15\,(c)]{gloeckner2015}; cf.\ also \cite{clark2012}).
\end{numba}
\begin{numba}
We shall use the following fact: If $L\in \N$ and $x_1,\ldots, x_L\geq 0$, then
\begin{equation}\label{exp1}
(1+x_1)(1+x_2)\cdots(1+x_L)\leq e^{x_1+\cdots+x_L}
\end{equation}
as $\ln((1+x_1)\cdots (1+x_L))=\sum_{j=1}^L\ln(1+x_j)\leq\sum_{j=1}^L x_j$.  We shall also use that
\begin{equation}\label{exp2}
e^x-1\leq e^x \,x\;\;\mbox{for all $x\geq 0$.}
\end{equation}
In fact, $e^x-1=\exp(x)-\exp(0)=\exp'(\xi)x$ for some $\xi\in [0,x]$ by the Mean Value Theorem, with $\exp'(\xi)=\exp(\xi)\leq \exp(x)$ by monotonicity of $\exp$.
\end{numba}
\begin{numba}
Let $(E,\|\cdot\|)$ be a normed space and $(\cL(E),\|\cdot\|_{\op})$ be the corresponding algebra of bounded operators. For all $L\in \N$ and $\alpha_1,\ldots,\alpha_L\in\cL(E)$, we trivially have
\begin{equation}\label{iterop-1}
\|(\id_E+\alpha_L)\circ\cdots\circ(\id_E+\alpha_1)\|_{\op}
\leq (1+\|\alpha_L\|_{\op})\cdots(1+\|\alpha_1\|_{\op}).
\end{equation}
Moreover,
\begin{eqnarray}
\lefteqn{
\|((\id_E+\alpha_L)\circ \cdots\circ (\id_E+\alpha_1))-\id_E\|_{\op}}\qquad\qquad \notag \\
& \leq & ((1+\|\alpha_L\|_{\op})\cdots(1+\|\alpha_1\|_{\op}))-1 \label{iterop-2}\\
&\leq & e^{\|\alpha_1\|_{\op}+\cdots+\|\alpha_L\|_{\op}}-1\label{iterop-3}\\
&\leq & e^{\|\alpha_1\|_{\op}+\cdots+\|\alpha_L\|_{\op}}
(\|\alpha_1\|_{\op}+\cdots+\|\alpha_L\|_{\op}).\label{iterop-4}
\end{eqnarray}
In fact,
\[
\beta:=(\id_E+\alpha_1)\cdots(\id_E+ \alpha_L)-\id_E
=\sum_{j=1}^m\sum_{\lvert I \rvert=j}\alpha_{i_1}\circ\cdots\circ \alpha_{i_j},
\]
using subsets $I=\{i_1,\ldots, i_j\}\subseteq \{1,\ldots,L\}$ with $i_1<\cdots < i_j$. Thus
\[
\|\beta\|_{\op}\leq
\sum_{j=1}^L\sum_{\lvert I \rvert=j}\|\alpha_{i_1}\|_{\op}\cdots \|\alpha_{i_j}\|_{\op}
=((1+\|\alpha_1\|_{\op})\cdots(1+\|\alpha_L\|_{\op}))-1,
\]
establishing (\ref{iterop-2}). Using~(\ref{exp1}) and~(\ref{exp2}), the inequalities~(\ref{iterop-3}) and~(\ref{iterop-4}) follow.
\end{numba}

\begin{proof}[Proof of Proposition~\ref{est-iter}]
We first prove the assertion for $k\in\{0,1\}$.\\[2.3mm]
\emph{The case $k=0$}. For $f,g\in \mathcal{V}$, let
\[
f*g:=((\id_\Omega+f)\circ (\id_\Omega+g))-\id_\Omega=g+f\circ (\id_\Omega+ g).
\]
Then $f*g\in\mathcal{V}$ and
\[
\|f*g\|_\infty\leq \|g\|_\infty+\|f\|_\infty.
\]
Given $f_1,\ldots, f_L\in \mathcal{V}$,
we define $f_L*\cdots*f_1$ recursively as
$f_L*(f_{L-1}*\cdots*f_1)$ and obtain
\begin{equation}\label{the-k-1}
\|f_L*\cdots*f_1\|_\infty\leq \|f_1\|_\infty+\cdots+\|f_L\|_\infty,
\end{equation}
by a straightforward induction. By induction on $2\leq L\in\N$, also
\[
((\id_\Omega+f_L)\circ\dots\circ (\id_\Omega+f_1))-\id_\Omega=f_L*\cdots*f_1,
\]
as the left-hand side of the equation equals
\begin{eqnarray*}
\lefteqn{
(\id_\Omega+f_L)(\id_\Omega+((\id_\Omega+f_{L-1})\circ\cdots\circ (\id_\Omega+f_1)-\id_\Omega))-\id_\Omega}
\qquad\qquad \\
& =& f_L*(((\id_\Omega+f_{L-1})\circ\cdots\circ (\id_\Omega+f_1))-\id_\Omega)\\
&=& f_L*(f_{L-1}*\cdots*f_1).
\end{eqnarray*}
Hence (\ref{onlycts}) is a re-writing of~(\ref{the-k-1}). Notably, (\ref{est-it-1}) holds for $k=0$ with $M_0:=1$.\\[2.3mm]
\emph{The case $k=1$}. Abbreviate $E:=\mathbb{R}^d$. For $f_1,\ldots, f_L\in \mathcal{V}$, $x_0\in \Omega$ and $x_{j}:=f_{j}(x_{j-1})$ for
$j\in\{1, \ldots, L-1\}$, we have
\[
\begin{split}
    (((\id_\Omega + f_L)&\circ\cdots\circ (\id_\Omega+f_1))-\id_\Omega)'(x_0) \\
                        &= ((\id_E+f_1'(x_0))\circ\cdots\circ (\id_E+f_L'(x_{L-1})))-\id_E
\end{split}
\]
with operator norm
$\leq ((1+\|f_1'(x_0)\|_{\op})\cdots(1+\|f_L'(x_{L-1})\|_{\op}))-1$, by~(\ref{iterop-2}). Passing to the supremum in $x_0\in \Omega$, we get
\[
\|(f_L*\cdots* f_1)'\|_{\infty,\op}\leq ((1+\|f_1'\|_{\infty,\op})\cdots (1+\|f_L'\|_{\infty,\op}))-1,
\]
which is bounded by $((1+\|f_1\|_{C^1})\cdots(1+\|f_L\|_{C^1}))-1$. Thus (\ref{onlylip1}) holds, and (\ref{onlylip2}) as well as (\ref{onlylip3})
follow using~(\ref{exp1}) and~(\ref{exp2}). As the latter number is also an upper bound for
$\|f_L*\cdots*f_1\|_\infty$, we deduce that
\[
\|f_L*\cdots *f_1\|_{C^1}\leq ((1+\|f_1\|_{C^1})\cdots(1+\|f_L\|_{C^1}))-1
\]
for all $L\in\N$ and $f_1,\ldots,f_L\in \Omega$.
By~(\ref{exp1}) and~(\ref{exp2}), the right-hand side is bounded by
\[
e^{\|f_1\|_{C^1}+\cdots+\|f_L\|_{C^1}}-1\leq
e^{\|f_1\|_{C^1}+\cdots+\|f_L\|_{C^1}}(\|f_1\|_{C^1}+\cdots+\|f_L\|_{C^1}),
\]
for all $L\in\N$ and $f_1,\ldots, f_L\in\mathcal{V}$.
Notably, (\ref{est-it-1}) holds for $k=1$ with $M_1:=1$.\\[2.3mm]
\noindent
For $k \geq 2$, the proof is by induction. Details are postponed to Appendix \ref{app:details}.
\end{proof}
\vspace{2mm}
The upper bounds of proposition \ref{est-iter} are theoretical. It could therefore be of interest to see how they relate to the practically achieved $C^k$-norms of diffeomorphisms of the form used in section \ref{sec:numres} and  how these bounds are affected by for example the number of compositions. For simplicity, we stick to the one-dimensional case. In figure \ref{fig: norm estimates}, the value
\begin{equation}
    \frac{\|((\id_\Omega+f_L)\circ \cdots \circ (\id_\Omega+f_1))-\id_\Omega\|_{C^k}}{ \left(\sum_{\ell}^{L} \|f_\ell\|_{C^k}\right) e^{k\sum_{\ell}^{L} \|f_\ell\|_{C^k}} }
\end{equation}
is shown for lower-order derivatives with varying number of layers $L$ and number of basis functions $d$ per layer. For $k=1,\, 2$ the estimates are relatively good with values close to $60\%$ and $15\%$ of $M_k$ respectively. 

These estimates are largely dependent on the choice of basis functions, so the results may look different for another set of basis functions than the ones used here. Further details regarding how the values of figure \ref{fig: norm estimates} are computed can be found in appendix \ref{sect: appendix norm estimates}.

\begin{figure}[htpb]
    \centering
    \includegraphics[width=\textwidth]{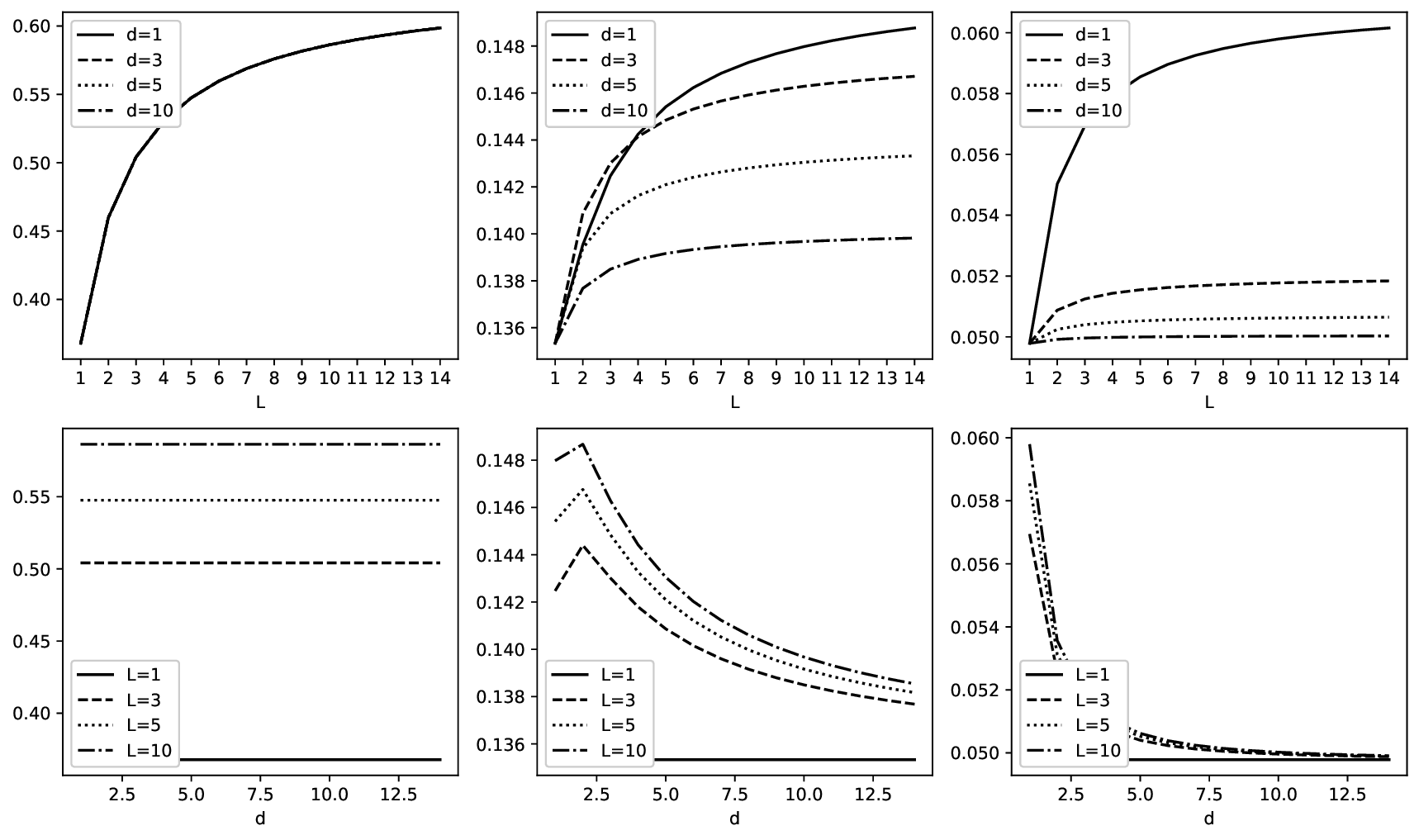}
\caption{Practical estimates of the $C^k$ bounding coefficient in equation \eqref{est-it-1} for $k=1, 2, 3$ (left to right).  (Top): Varying number of layers $L$. (Bottom): Varying number of basis functions per layer $M$.}
\label{fig: norm estimates}
\end{figure}        

\section{Implementation of the methods}\label{sect:methods}
This section describes the implementation of the approach outlined in section \ref{sect:deeplearning}. We first consider the case of curves and then the case of surfaces. In both cases, we need three parts: a discrete problem formulation, a choice of basis functions, and a projection operator to ensure that the Lipschitz constant of each layer is bounded by one. 

\subsection{Curves}\label{sect:methods-curves}
\subsubsection*{Problem Formulation}\label{sect:curves-problem-fomrulation}
%
%
Denote by $ W = \{\mathbf{w}^{\ell}\}_{\ell=1}^L \subset \mathbb{R}^d  $ a set of weight vectors $ \mathbf{w}^\ell $, and use the notation 
\begin{equation}
    \varphi(x; W) = \varphi_L(x; \mathbf{w}^{L}) \circ ... \circ \varphi_1(x; \mathbf{w}^{1}) 
    \label{eq: methods curves composition notation}
\end{equation}
to highlight the dependency of $ \varphi $ on \( W \), and $ \varphi_\ell $ on $ \mathbf{w}^{\ell}$, respectively.  Each of the layers are defined as residual-blocks 
\[
    \varphi_\ell(x, \mathbf{w}^\ell) = x + \sum_{n=1}^{M}w^\ell_n f_n(x)
\]
with basis function \(f_n\) as defined in \eqref{eq: curve basis}.
The goal is to find a set of vectors~$ W $ such that the function $ \varphi(\cdot, W) \in \text{Diff}^+(\Omega) $ minimizes the loss function \eqref{eq: optimization problem}. To approximate the shape distance, take a collection $ X = \{x_k\}_{k=1}^K \subset \Omega$ of linearly spaced points, and compute the mean squared error (MSE) between the $ q $-maps evaluated at these points. In other words, the loss function is approximated by the function
\begin{equation}
    E(W; X) = \frac{1}{K} \sum_{k=1}^{K} \lvert q(x_k) - \sqrt{\varphi'(x_k; W)} r(\varphi(x_k; W))\rvert^2.
    \label{eq: discrete-loss-func-curves}
\end{equation}
This function should be optimized with respect to the weights \( W \) under the constraints that $ \varphi(0, W) = 0 $, $ \varphi(1, W) = 1 $, and  $ \varphi'(\cdot: W) > 0 $. The optimization is handled by a BFGS-optimizer, with a projection step between the weight updates to ensure that the layers are invertible.\footnote{This approach has performed well for the experiments in this article, but we have found limited theory on the convergence of BFGS with a projection step. A large scale application of the reparametrization algorithm might gain from changing to a constrained optimization routine, or to another projected quasi-Newton algorithm with a better theoretical foundation.} The projection step is similar to the spectral normalization approach of \cite{behrmann2019}, and entails a scaling of the weight vectors such that the Lipschitz constants of each of the residual maps $ x\mapsto \sum_{n=1}^M w^\ell_n f_n(x) $ are bounded by one. 

\subsubsection*{Basis Functions} The tangent space of the diffeomorphism group on an interval consists of smooth functions that vanish on the boundary of the interval. A simple finite-dimensional basis which satisfies this relation is a truncated Fourier sine series.
However, the algorithm seems to perform better when the derivatives of the basis functions have similar magnitudes, and we scale the basis functions such that their derivatives are bounded by one. Therefore, we choose the basis of functions
\begin{equation}
    f_n(x) = \frac{\sin(n \pi x)}{n \pi}, \quad n = 1, ..., M
\label{eq: curve basis} 
\end{equation}
for $ M \in \N $.

\subsubsection*{Projection Operator} To ensure the invertibility of each layer \( \varphi_\ell \), we impose that the Lipschitz constant of each of the residual maps is smaller than one. For this purpose, denote by \( \mathcal{W}^\varepsilon \) the \textit{feasible set} of the weight vectors. To define this set explicitly, consider the upper estimate of the Lipschitz constants given by the inequalities
\begin{equation}
    \left\lvert\left(\sum_{n=1}^M w_n f_n(x)\right) - \left(\sum_{n=1}^M w_n f_n(y)\right)\right\rvert
    \leq \underbrace{\left( \sum_{n=1}^{M} \lvert w_n\rvert L_n \right)}_{:= L} \lvert x-y\rvert,
\label{eq: lipschitz bound}
\end{equation}
where $ L_n $ is the Lipschitz constant for the basis function $ f_n $.  For our choice of basis functions, we have that \(L_n = \sup_{x \in \Omega}|f_n'(x)| = 1\) for all $ \; n=1,\,..., M $. Thus \( L =\sum_{n=1}^{M} |w_n| L_n = \|\mathbf{w}\|_1 \). To ensure that the layers are diffeomorphisms, we require that $L$ is \textit{strictly} smaller than one. To achieve this, we define the feasible set
\begin{equation}
    \mathcal{W}^\varepsilon = \left\{ \mathbf{w}\in\mathbb{R}^M \; : 
    \;  \|\mathbf{w}\|_{1} \leq 1 -\varepsilon \right\}
\label{eq: curves feasible set} 
\end{equation}
for some small $\varepsilon$. If the optimization algorithm makes a weight update which causes a violation of these constraints, then it needs to be projected onto the feasible set. We define this projection using a simple scaling of the vector, defined by
\begin{equation}
\pi\colon \mathbb{R}^M \to \mathcal{W}^\varepsilon, \quad \pi(\mathbf{w}) = \frac{1-\varepsilon}{\max\{1 - \varepsilon, \|\mathbf{w}\|_1\}} \mathbf{w}.
\label{eq: curves projection map}
\end{equation}

\subsection{Surfaces}\label{sect:methods-surfaces}
\subsubsection*{Problem Formulation}\label{sect:surfaces-problem-fomrulation}
The problem formulation for reparametrization of surfaces is analogous to the case for curves, with the natural extensions to two dimensions. We approximate the continuous loss function by
\begin{equation}
    E(W; X) = \frac{1}{K^2} \sum_{i, j=1}^{K} \lvert q(\mathbf{x}_{i,j}) - \sqrt{J_\varphi(\mathbf{x}_{i, j}; W)} r\left( \varphi(\mathbf{x}_{i, j}; W) \right) \rvert^2,
\label{eq: discrete loss surfaces} 
\end{equation}
where $ X = \{ \mathbf{x}_{i, j}\,,\, i, j = 1,...,K\} \subset \Omega (\subset \mathbb{R}^2)$, and \(\varphi\) as in \eqref{eq: methods curves composition notation}. The loss function should be optimized with respect to the weights \( W \) under the constraints $ J_\varphi(\cdot: W) > 0 $ and $ \varphi(\partial M; W) = \partial M$.
\subsubsection*{Basis Functions}\label{sec:basis-elements}
The tangent space of the diffeomorphism group on $ \Omega = [0, 1]^2$ consists of smooth vector fields tangential to the boundary $ \partial \Omega $. We construct a basis for this space componentwise, as a tensor product basis of Fourier sine series in one direction and a full Fourier basis in the other. This choice was inspired by the approach adopted in  \cite{kurtek2012}. In the first component, these functions are given by the three families of functions,
\begin{subequations} \label{eq: basis vector fields}
\begin{align}
        \xi_k(x, y) &= \frac{\sin(\pi k x)}{\pi k},\\
        \eta_{k, l}(x, y) &= \frac{\sin(\pi k x) \cos(2\pi l y)}{\pi kl},\\
        \phi_{k, l}(x, y) &= \frac{\sin(\pi k x) \sin(2 \pi l y)}{\pi kl},
\end{align}
\end{subequations}
with corresponding basis functions \( \tilde \xi_k(x, y) = \xi_k(y, x) \), \( \tilde \eta_{k, l}(x, y) = \eta_{k, l}(y, x) \) and \(\tilde \phi_{k, l}(x, y) = \phi_{k, l}(y, x)\) for the second component. A few example basis vector fields are shown in figure \ref{fig: vectorfields}. The basis is truncated such that the coefficients $k, l = 1, ..., N$ for some fixed value of $N \in \N$.  The total number of basis functions is therefore $M = 2(2N^2 + N) $. In other words, the number of basis functions per layer in the network increases quadratically with $N$.
\begin{figure}[!htb] 
    \centering
    \includegraphics[width=\textwidth]{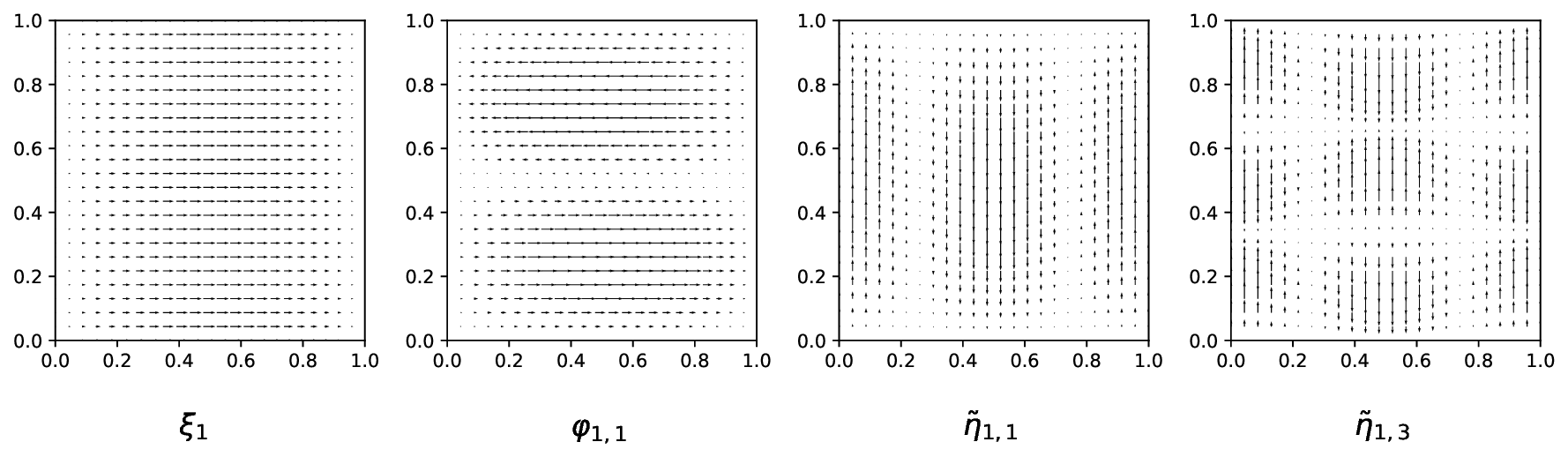}
    \caption{Four examples of basis elemets of $T_{\id}(\text{Diff}^+(\Omega))$, as described in \ref{sec:basis-elements}. The basis fields are constructed componentwise as products of trigonometric functions and point either purely in the \( x \)- or \( y \)-direction.}
    \label{fig: vectorfields}
\end{figure}

\subsubsection*{Projection Operator}
Similarly as in section \ref{sect:methods-curves}, the invertibility of each layer \( \varphi_\ell \) is ensured by bounding the Lipschitz constants of the residual maps by one. The projection operator is analogous to the one-dimensional case, and is based on the approximation \eqref{eq: lipschitz bound}. However, the Lipschitz constant $L_n$ of each of the basis vector fields is no longer simply equal to $1$. Instead, we make use of the following approximation:
\[
  L_n = \sup_{x \in \Omega} \|D f_n(x)\|_2 \leq  \sup_{x \in \Omega}\|D f_n(x)\|_F, \quad n=1,\,..., M ,
\]
where \( Df_n \) denotes the Jacobian matrix of \( f_n \), the symbol
$\|\cdot\|_2$ denotes the spectral norm, and $\|\cdot\|_F$ the Frobenius norm. According to this estimate, the Lipschitz constants for each of the three types of basis vector fields are bounded by
\begin{equation}
        L^\xi_{k} = 1, \quad L^\eta_{k, l} = \frac{\sqrt{k^2 + 2 l^2}}{kl}, \quad L^\phi_{k, l} = \frac{\sqrt{k^2 + 2 l^2}}{kl},
\end{equation}
and are found by taking the Frobenius norm of the Jacobian matrices of the basis functions in \eqref{eq: basis vector fields}.
Similar constants exist for the corresponding vector fields in the $y$-direction. Adopting the notations from the one-dimensional case, we define 
\begin{equation}
 \pi\colon \mathbb{R}^M \to \mathcal{W}^\varepsilon, \quad  \pi(\mathbf{w}) = \frac{1-\varepsilon}{\max\left\{1-\varepsilon, \left( \sum_{n=1}^{M} \lvert w_n\rvert  L_n \right)\right\}} \mathbf{w},
\end{equation}
which maps infeasible weight vectors to the feasible set.

\subsection{Shape Interpolation}\label{sect:methods-interpolation}
An interesting application of the reparametrization algorithm is to create realistic transitions between different shapes. The shape distance \( d_\Ess \) corresponds to the length of some geodesic between the two parametric shape representations in the pre-shape space $ \mathcal{P} $.
Each of the points along this geodesic corresponds to a parametric representation of some intermediate shape, providing a natural transformation between the original curves or surfaces. 

In the specific case of reparametrization of curves using the SRVT, we may compute a geodesic between their shapes. Assume for any $\tau \in (0, 1)$ that the curve $\tau \mathcal{R}(c_1)(x) + (1 - \tau) \mathcal{R}(c_2)(x) \neq 0 ,\,\forall\, x \in\Omega$. Then the map
\begin{equation}
    \tau \mapsto \mathcal{R}^{-1}(\tau \mathcal{R}(c_1) + (1 - \tau) \mathcal{R}(c_2))
\end{equation}
with
\begin{equation}
    \mathcal{R}^{-1}(q)(x) = \int_0^x q(\xi) \lvert q(\xi)\rvert \,d\xi
\end{equation}
defines a geodesic between the two curves $c_1$ and $c_2$.

The surface transforms, however, are not invertible. Therefore, we adopt an  alternative approach which seems to work well in many cases. Define the map
\[
    \gamma: \mathcal{P} \times \mathcal{P} \times [0, 1] \to C^\infty(\Omega, \mathbb{R}^n) , \quad \gamma(f_1, f_2, \tau) = \tau f_1 + (1 - \tau) f_2.
\] 
Note that for $f_1, f_2 \in \mathcal{P}$, the map $\tau \mapsto \gamma(f_1, f_2, \tau) $ defines a convex linear combination of two elements in $C^\infty(\Omega, \mathbb{R}^n)$ connecting the two surfaces.
However, when applied directly to the parametrized curves without performing the reparametrization, the corresponding path of this interpolation in shape-space is typically far from length-minimizing. 
To improve upon this shape transformation, we start by reparametrizing one of the shapes by finding $\varphi^* \in \argmin_{\varphi^+ \in \Diff(\Omega)} d_\mathcal{P}([f_1], [f_2 \circ \varphi]) $ and then apply the shape interpolation according to
\begin{equation}
    \tau \mapsto \gamma(f_1, f_2\circ\varphi^*, \tau).
\label{eq: shape interpolation} 
\end{equation}

\begin{figure}[hbpt]
    \centering
    \includegraphics[width=0.8\textwidth]{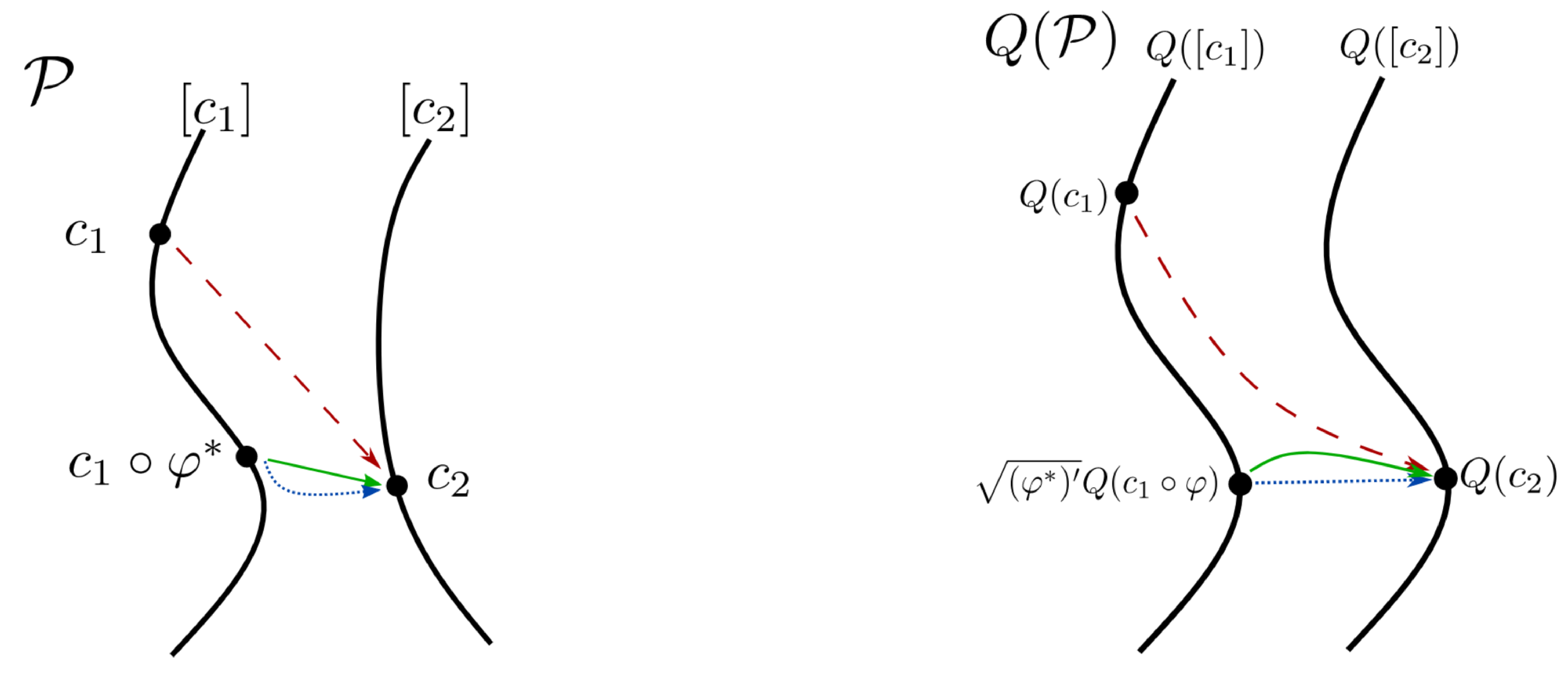}
    \caption{Different interpolation methods between two shapes and how these curves relate to different shape representations. The dashed red curve corresponds to the direct linear interpolation between the two original parametric shapes. The solid green curve corresponds to a geodesic, whereas the dotted blue line represents a linear interpolation after reparametrization given in \eqref{eq: shape interpolation}.}
    \label{fig: shape interpolation illustration} 
\end{figure}

Figure \ref{fig: shape interpolation illustration} illustrates how the different interpolation approaches compare to each other both in pre-shape space and in shape space. Figure~\ref{fig: curve interpolation} shows the different approaches applied to curves and figure \ref{fig: mnist linear interpolations} presents an example of the approach \eqref{eq: shape interpolation} applied to images.
Applied directly to the parametrized curves as in the left panel of \cref{fig: curve interpolation}, the intermediate curves of the transition between the two shapes does not follow what we intuitively consider a shortest path between the curves.
Instead of only changing the parts of the curves which differs between the two shapes, the entire curve is moved in the intermediate steps. 
This is in contrast to both the middle- and the right panel of \cref{fig: curve interpolation} where the upper half-circle is stationary, and only the lower half of the circle is moving between the shapes.

\begin{figure}[!htb]
    \centering
    \includegraphics[width=\textwidth]{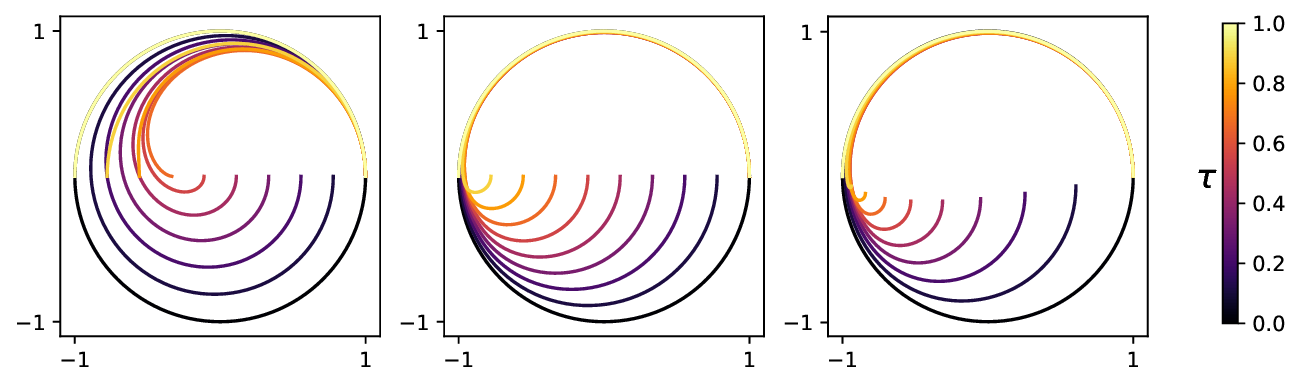}
    \caption{Comparison of the methods described in section \ref{sect:methods-interpolation} for interpolation between a half-circle and a circle. (Left) Direct linear interpolation, corresponding to the dashed red curve in \ref{sect:methods-interpolation}. (Middle) Linear interpolation after reparametrization, corresponding to the  dotted blue curve. (Right) Geodesic, corresponding to the solid green curve.}
    \label{fig: curve interpolation}
\end{figure}

\subsection{Notes on Implementation}\label{sect:implementation}
Since the structure of the diffeomorphisms that we are searching for is similar to that of a residual neural network, the weight optimization is analogous to the process of training a neural network. This is typically done by an iterative line search method such as gradient descent rather than Newton's method or other Quasi-Newton algorithms. However, since we are dealing with a deterministic problem, we have chosen to use the BFGS-algorithm (see e.g.\ \cite{nocedal2006}). The network is implemented using the PyTorch machine learning framework in Python\cite{paszke2019} which takes care of the computation of gradients and updates the weight vectors. By building our algorithms upon the PyTorch-framework, the algorithms easily achieve high performance, scalability and extensibility.


\FloatBarrier
\section{Numerical Experiments}\label{sec:numres}
\subsection{Curves}\label{sect:results-curves}
We start by comparing two different parametric curves representing the same shape. A simple way to find two such curves is by first selecting a curve $ c $, and reparametrize the curve with some diffeomorphism~$ \varphi $. By applying the reparametrization algorithm to the curves $ c_1 = c \circ\varphi $ and $ c_2 = c $, we expect to recover a diffeomorphism $ \psi \approx \varphi $ such that the distance $d_\mathcal{P}(c_1, c_2\circ\psi) \approx 0$. As a first example, consider the parametric curve and diffeomorphism given by

\begin{equation}
    \begin{gathered}
        c(t) = [\cos(2\pi t), \sin(4 \pi t)], \\
        \varphi(t) = \frac{\log(20t + 1)}{2\log(21)} + \frac{1 + \tanh(20(t - 0.5))}{4 \tanh(10)}.
    \end{gathered}
\label{eq: curves example} 
\end{equation}

The curves $ c $ and $ c \circ \varphi $ are shown in  figure~\ref{fig: same shape curves}. The dots along the curves represent linearly spaced points on the domain $\Omega = [0,1]$ for the curve parameter $t$. 
\Cref{fig: same shape curves} presents the results of applying the reparametrization algorithm to these curves using a network with $L=10$ layers and $M=10$ basis functions per layer. The resulting reparametrization $ \psi $ is visually indistinguishable from the true diffeomorphism $ \varphi $, and the components of the reparametrized curve $ c\circ \psi$ exactly match the components of the components of $ c \circ \varphi $.

\begin{figure}[htb]
    \centering
    \includegraphics[width=0.68\textwidth]{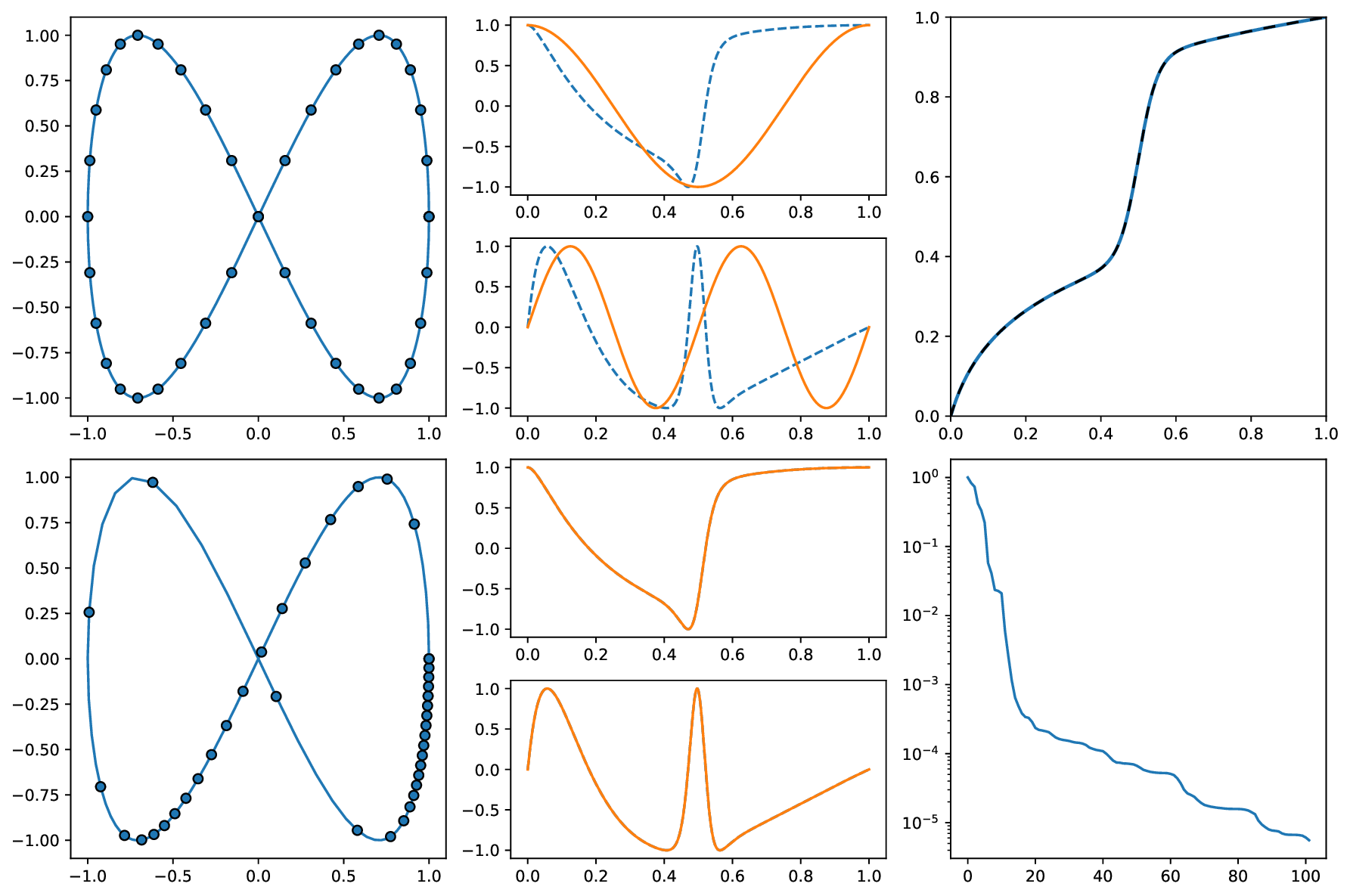}
    \caption{(Left) The curve $c$ to be reparametrized on top, and the target curve $c \circ \varphi$ below, defined in \eqref{eq: curves example}. (Middle) The component functions of the two curves $ c\circ\varphi $ (dotted blue) and $ c $ (solid orange) before, and after reparametrization. (Top right) The true reparametrization $\varphi$ (dotted black) compared to the reparametrization $\psi$ found by the algorithm. (Bottom right): The value of the loss function plotted against the iteration number of the weight updates, given relative to the initial error. Each iteration corresponds to one iteration of the BFGS algorithm including line-search.} 
    \label{fig: same shape curves}
\end{figure}

\FloatBarrier
\subsection{Surfaces}
Similarly as in section \ref{sect:results-curves}, the performance of the reparametrization algorithm is investigated by comparing surfaces representing the same shape. The results are presented in figures \ref{fig: surfaces same shape hyperboloid surfaces}, \ref{fig: surfaces same shape cylinder surfaces}. The coloring on the surfaces represents the local area scaling factor $ a_f  $ for the given surface. Once again, the algorithm does indeed find a diffeomorphism $ \psi \approx \varphi $. Moreover, based on figure~\ref{fig: surfaces convergence}, the convergence behavior of the reparametrization algorithm under an increasing number of basis functions and network layers is similar to the behavior already observed for curves.

The previous examples are using artificial surfaces defined by explicit parametric functions. In most practical applications, however, objects are represented by discrete observations. For example, images are typically stored as matrices representing image intensities. Such data must be transformed to parametrized surfaces before reparametrization.  For this purpose, we made use of a function\footnote{\href{https://pytorch.org/docs/stable/generated/torch.nn.functional.grid_sample.html}{torch.nn.functional.grid\_sample}} from the PyTorch library which implements a bicubic convolution algorithm, \cite{keys1981}. Since we are working with three-dimensional surfaces, it seems natural to represent single-channel images by their graphs. 

The fact that the optimal reparametrization is unknown when comparing surfaces representing different shapes, implies that it is difficult to properly verify the correctness of the algorithm. However, the shape interpolation method of section \ref{sect:methods-interpolation} provides a good alternative to validate the results; an optimal reparametrization of one surface relative to the other, should provide a smoother interpolation between the two. Figure \ref{fig: mnist linear interpolations} illustrates the interpolation between two images before and after reparametrization. Before reparametrization, the intermediate images clearly show elements from each of the images simultaneously, as one fades away and the other is revealed. After reparametrization the interpolation represents a smoother transition, with one of the surfaces being ``warped'' into the other.
{\textwidth=0.8\textwidth
\begin{figure}[hbtp]
    \centering
    \includegraphics[width=0.93\textwidth]{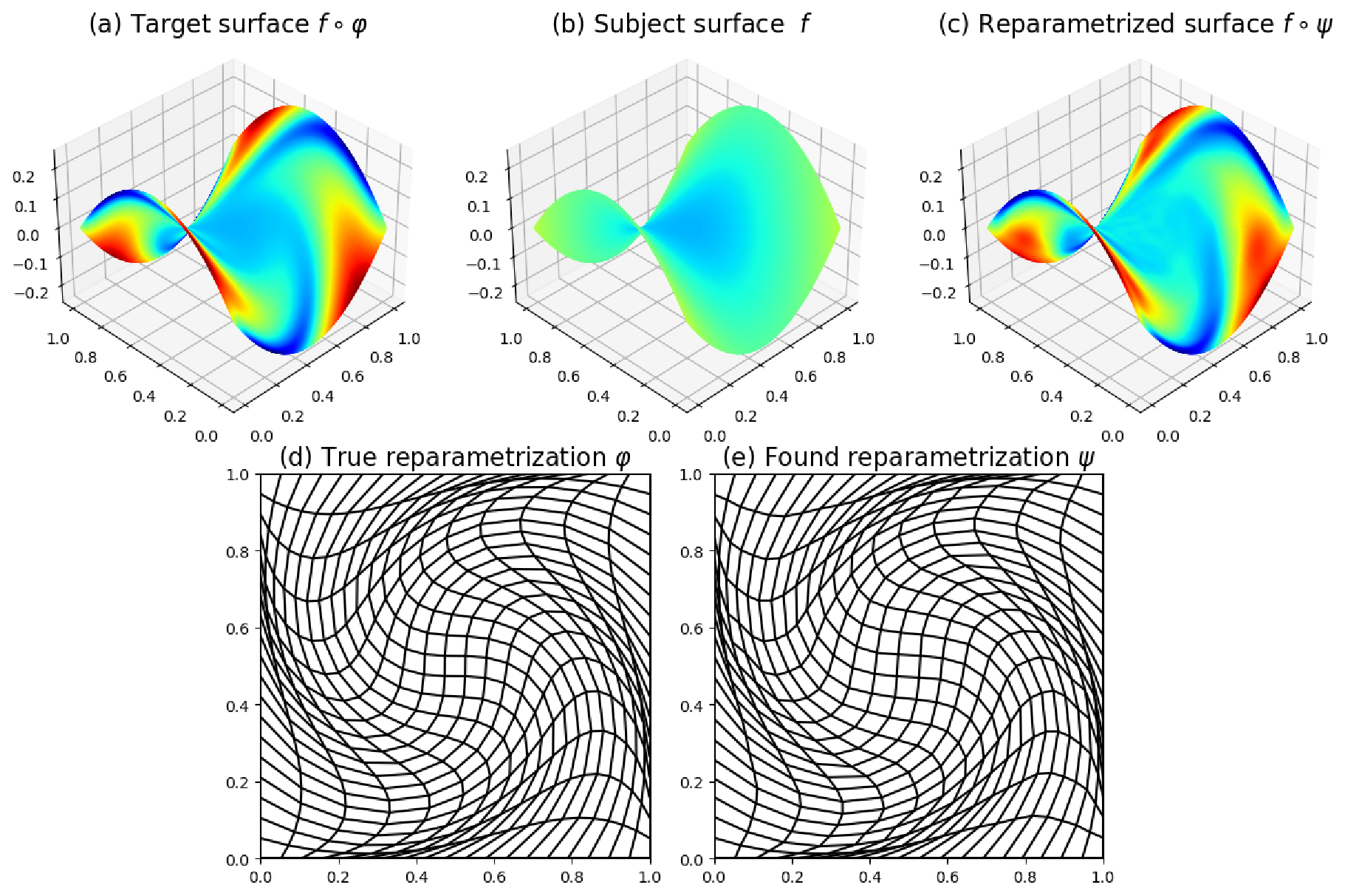}
    \caption{First example of reparametrization of a surface when compared to another parametric surface representing the same shape. The colors on the surfaces represent the area scaling factor of the parametrization. The target surface \( f \circ \varphi \) (a) have been created by reparametrizing the subject surface \( f \) (b) by some prescribed diffeomorphism \( \varphi \) (d). After applying the reparametrization algorithm to compare the two surfaces, the resulting reparametrization is shown in (e) and the reparametrized surface shown in (c).} 
    \label{fig: surfaces same shape hyperboloid surfaces}
\end{figure}

\begin{figure}[hbtp]
    \centering
    \includegraphics[width=0.93\textwidth]{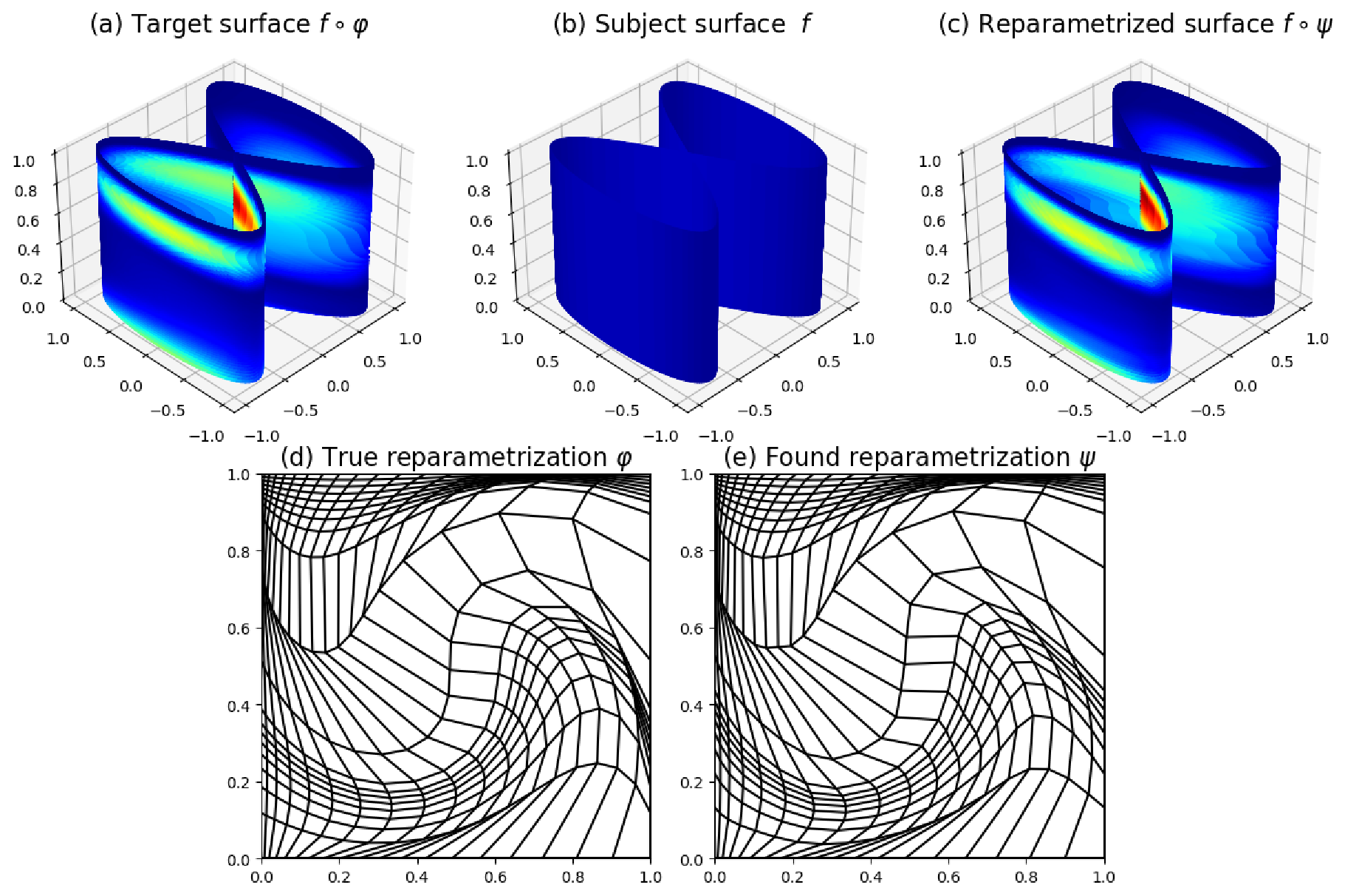}
    \caption{Second example of reparametrization of two surfaces representing the same shape such as in figure \ref{fig: surfaces same shape hyperboloid surfaces}, this time including an immersion and a more complex diffeomorphism involving both rotations and stretching.}
    \label{fig: surfaces same shape cylinder surfaces}
\end{figure}
}

\FloatBarrier

\begin{figure}[htbp]
    \centering
    \includegraphics[width=\textwidth]{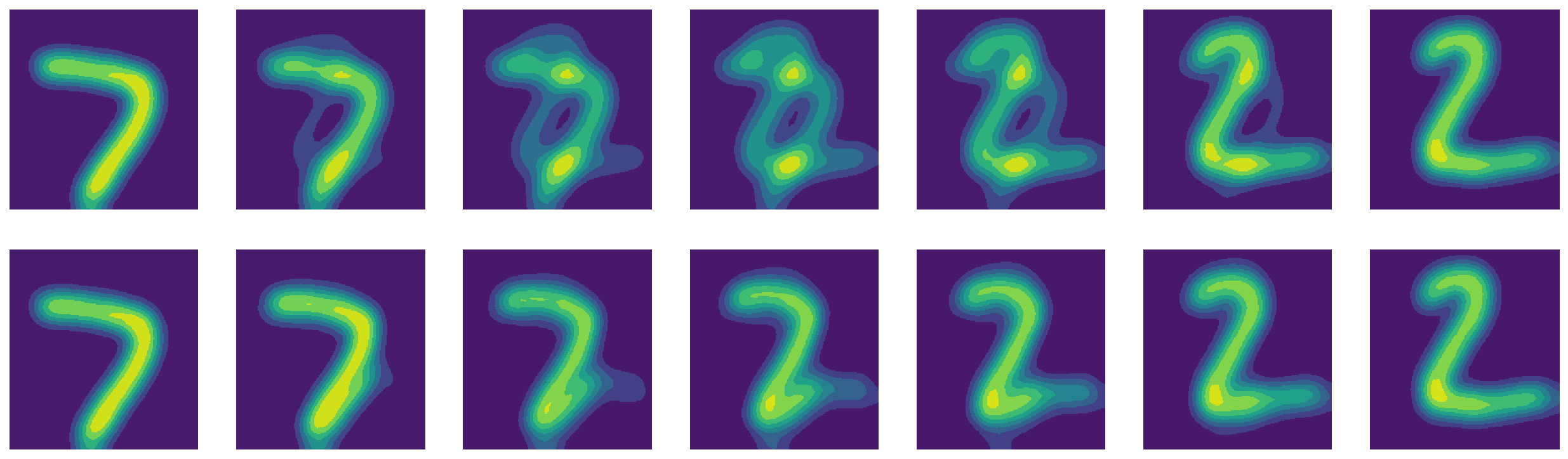} 
    \caption{Linear interpolation between two images from the MNIST digits dataset, before and after reparametrization of the surfaces. The first row corresponds to direct linear interpolation between the graphs, whereas the second row corresponds to linear interpolation after reparametrization as given in \eqref{eq: shape interpolation}.}
    \label{fig: mnist linear interpolations}
\end{figure}

\subsection{Comparing Network Sizes}
Figure \ref{fig: curves convergence} presents the results of the reparametrization algorithm for a varying number of layers and basis functions per layer in the network. The results represent the value of the loss function \eqref{eq: discrete-loss-func-curves} after reparametrization for the curves from the example in figure \ref{fig: same shape curves}. It seems that by keeping the number of layers low while increasing the number of basis functions, or vice versa, results in a relatively poor match between the functions. A combination of network depth and width is required to reduce the error to the smallest values.  

Figure \ref{fig: surfaces convergence} presents similar results based on the reparametrization of the surfaces in figure \ref{fig: surfaces same shape hyperboloid surfaces}. Note that $N$ represents the ``largest frequency'' of the basis functions, and not the number of basis functions. This number is much higher as pointed out in section \ref{sect:methods-surfaces}. In other words, a deep network will be able to achieve similar performance as a wide network, using much fewer coefficients. 

\begin{figure}[htb]
    \centering
    \includegraphics[width=0.9\textwidth]{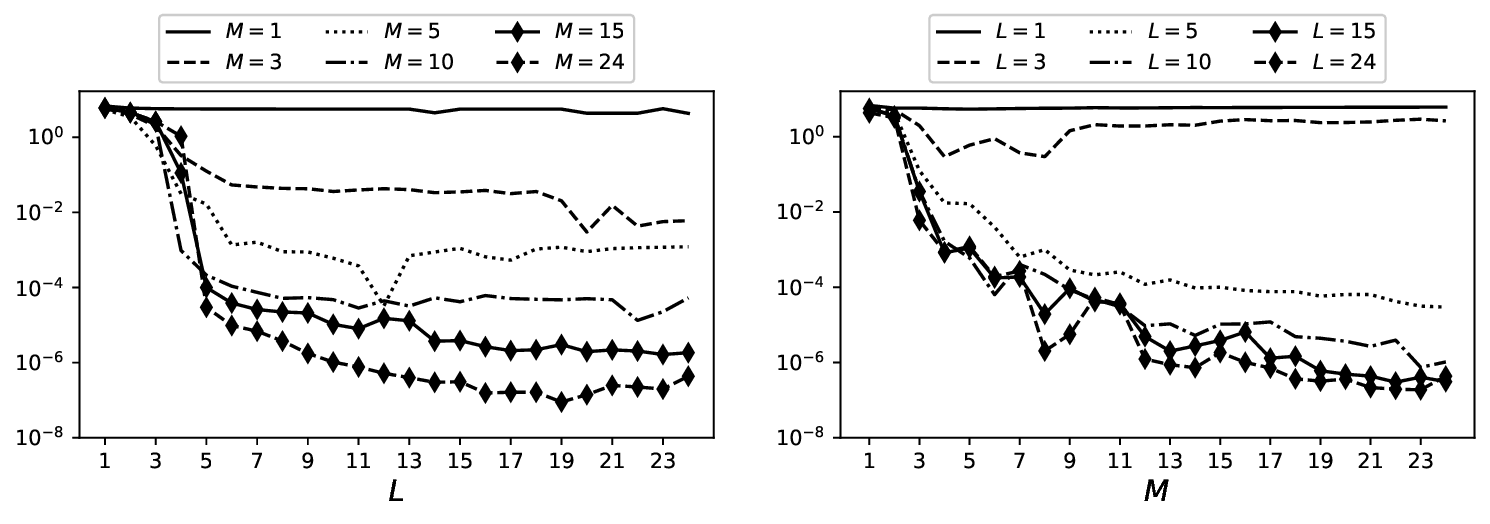}
    \caption{Final error after reparametrization of the curves in figure \ref{fig: same shape curves}, for varying number of layers $L$ and basis functions per layer $M$.}
    \label{fig: curves convergence}
\end{figure}
\begin{figure}[htb]
    \centering
    \includegraphics[width=0.9\textwidth]{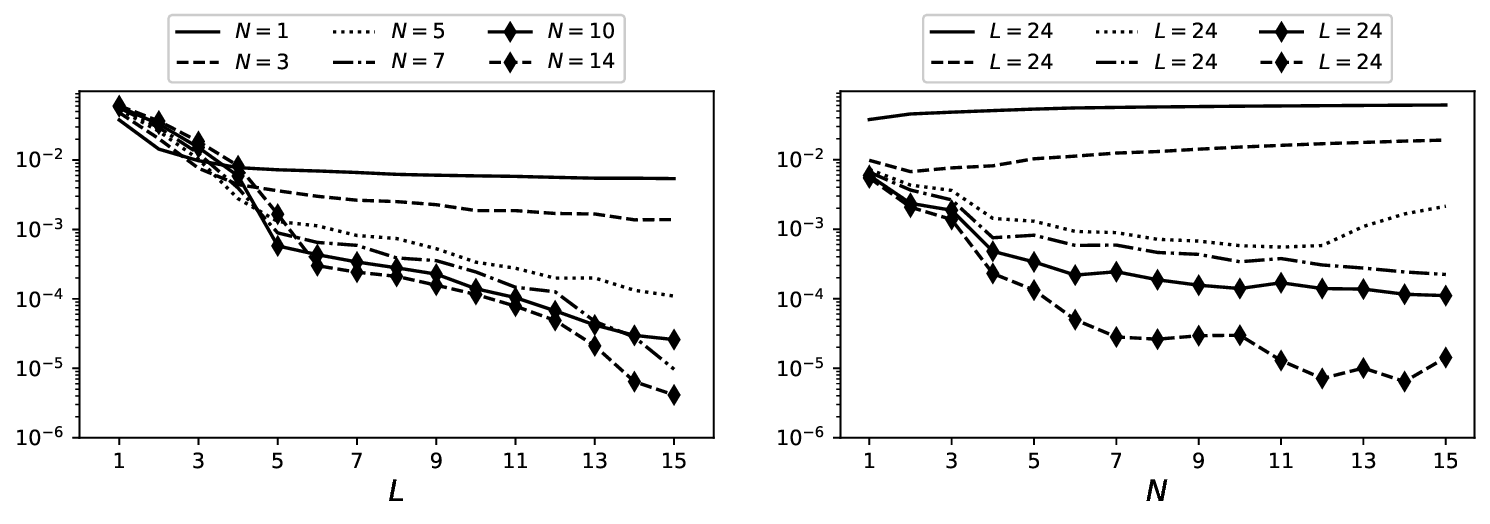}
    \caption{Final error after reparametrization of the surfaces in figure \ref{fig: surfaces same shape hyperboloid surfaces} for varying number of layers $L$ and the ``largest frequency'' $N$ of the basis functions The number of basis functions per layer \( M \) is in this case related to the frequency by $M = 2(2N^2 + N) $.}
    \label{fig: surfaces convergence}
\end{figure}

\FloatBarrier

\section{Discussion} \label{sec:discussion}
Based on \cref{fig: curves convergence} there seems to be a lower limit for how close to the optimal reparametrization one can get using this approach, as opposed to what one would expect based on the universal approximation properties presented in \cref{sect:Diffeos}.
The main reason behind this behavior seems to be the stopping criteria for the BFGS-optimizer. There are three criteria playing a role in the stopping behaviour: The termination tolerance on the gradient norm, the termination tolerance on the change of function value, or the maximal number of iterations in the optimizer.
By significantly lowering the termination tolerances and increasing the maximal number of iterations, it is possible to further reduce the errors for the larger networks.
This reduction, however, comes at a high computational cost relative to small improvements to the registration.

Due to the relationship of the algorithm with the gradient-descent algorithm, we have chosen to keep the structure and activation/basis functions from the original problem, rather than doing an extensive investigation into alternative network structures.
Because of the constraints of the diffeomorphism group, it does take some effort to switch network structures and activation functions we have considered this to be out of the scope of this article.
However, it is most certainly an interesting topic for future research. 

One important difference between a typical deep learning problem and the algorithm presented here, is that since we are registering a single pair of shapes, overfitting is desirable in our case --- at least under the assumption that the shapes are without noise. This assumption is justified since the noise from the original data will usually be handled by the method one uses to create a parametric curve/surface. This is typically done by interpolation or other regression methods with the possibility to add a regularizer or other preprocessing steps to reduce noise. An example is the bicubic interpolation method used for the MNIST-images in \cref{fig: mnist linear interpolations}. For future research, however, it would be interesting if one could incorporate the construction of the parametrization from a point collection into the network structure itself. In such a case, overfitting would indeed pose a problem, and typical regularization techniques would need to be applied. 

\section{Conclusion}
In this paper we proposed a method for the solution of optimization problems on the group of diffeomorphisms. The method is inspired by deep neural networks and implemented in PyTorch. An analysis of the universal approximation properties and a priori estimates of the norms of the obtained approximations are presented. A number of numerical results illustrate the merit of the proposed approach in applications of shape analysis.

\appendix
\section{Proof of Proposition \ref{est-iter}}\label{app:details}
In this appendix, we provide the details postponed in the proof of Proposition~\ref{est-iter}. Let us recall the statement for the reader's convenience:

\begin{numba}[Statement of Proposition \ref{est-iter}]
For all $k\in \N_0$ and all $L\in \N$,
\begin{eqnarray*}
    \lefteqn{\|((\id_\Omega+f_L)\circ \cdots \circ (\id_\Omega+f_1))-\id_\Omega\|_{C^k}}\qquad\qquad \\
    &\leq &  M_k\, e^{k\sum_{j=1}^L\|f_j\|_{C^k}}\,
    (\|f_1\|_{C^k}+\cdots+\|f_L\|_{C^k})
\end{eqnarray*}
holds for all $f_1,\ldots, f_L\in \Diff_{\partial}(\Omega)-\id_\Omega$ such that $\|f_1\|_{C^k}+\cdots+\|f_L\|_{C^k}\leq 1$.
\end{numba}

Recall that in Section \ref{sect:multcomp} we have already established the statement for all $L \in \N$ and $k \in \{0,1\}$. Thus all we need to prove is the statement for all $k\geq 2$.

\begin{proof}[Proof of Proposition {\rm\ref{est-iter}} in the case $k \geq 2$]
The proof is by induction.
Assume that $k\geq 2$ and assume that the asserted estimates already hold for $k-1$ in place of~$k$.
For $f,g\in\mathcal{V}$, applying Fa\'{a} di Bruno's formula to the first term in $f*g=(\id_\Omega+f)\circ (\id_\Omega+g)-\id_\Omega$, we get for all $x\in \Omega$ and $y_1,\ldots, y_k\in\wb{B}^E_1(0)$
\begin{eqnarray}
\lefteqn{d^k(f*g)(x,y_1,\ldots,y_k)}\qquad\qquad\notag \\
&=&
\sum_{j=2}^k\sum_{P\in P_{k,j}}
d^jf(h(x),d^{\lvert I_1 \rvert }h(x,y_{I_1}),\ldots,d^{\lvert I_j \rvert}h(x,y_{I_j}))\label{thesum}\\[.3mm]
&& + \,(\id_E+f'(x))(d^kg(x,y_1,\ldots,y_k)),\label{thesum2}
\end{eqnarray}
with $h:=\id_\Omega+g$. The norm of the summand in (\ref{thesum2}) is bounded by
\begin{equation}\label{thesum3}
(1+\|f'\|_{\infty,\op})\|g^{(k)}\|_{\infty,\op}
\leq e^{\|f\|_{C^k}}\|g\|_{C^k};
\end{equation}
this is $\leq M_k\, e^{k(\|f\|_{C^k}+\|g\|_{C^k})}\|g\|_{C^k}$.
The norm of a summand in (\ref{thesum}) is
\[
\leq \|f^{(j)}\|_{\infty,\op}\|h^{(\lvert I_1\rvert)}\|_{\infty,\op} \cdots\|h^{(\lvert I_j\rvert)}\|_{\infty,\op} \leq \|f\|_{C^k}\|h^{(\lvert I_1\rvert)}\|_{\infty,\op}
\cdots\|h^{(\lvert I_j\rvert )}\|_{\infty,\op}.
\]
For $a\in\{1,\ldots, j\}$, possible cases are that $\lvert I_a\rvert =1$; then
\[
\|h^{(\lvert I_a\rvert )}\|_{\infty,\op}=\|\id_E+g'\|_{\infty,\op}
\leq 1+\|g'\|_{\infty,\op}\leq e^{\|g\|_{C^k}}=e^{\lvert I_a \rvert\, \|g\|_{C^k}}.
\]
If $\lvert I_a \rvert \geq 2$, then 
\[
  \|h^{(\lvert I_a \rvert)}\|_{\infty,\op} =\|g^{(\lvert I_a\rvert)}\|_{\infty,\op}\leq \|g\|_{C^k}
\leq e^{\|g\|_{C^k}}\leq e^{\lvert I_a \rvert \, \|g\|_{C^k}}.
\]
As 
\[
  e^{\lvert I_1\rvert \,\|g\|_{C^k}}\cdots e^{\lvert I_j\rvert\,\|g\|_{C^k}}=e^{k\|g\|_{C^k}}\leq
M_{\lvert I_1\rvert }\cdots M_{\lvert I_j\rvert}e^{k(\|f\|_{C^k}+\|g\|_{C^k})},
\]
we get
we get
\begin{eqnarray*}
\|(f*g)^{(k)}\|_{\infty,\op} &\leq & \left(\sum_{j=2}^k\sum_{P\in P_{k,j}}
M_{\lvert I_1\rvert}\cdots M_{\lvert I_j \rvert }\right)\|f\|_{C^k}e^{k(\|f\|_{C^k}+\|g\|_{C^k})}\\[.5mm]
& & +  \,M_k\, e^{k(\|f\|_{C^k}+\|g\|_{C^k})}\|g\|_{C^k}\\[1.1mm]
& \leq & M_k\, e^{k(\|f\|_{C^k}+\|g\|_{C^k})}(\|f\|_{C^k}+\|g\|_{C^k}).
\end{eqnarray*}
Thus $\|f*g\|_{C^k}\leq M_k\, e^{k(\|f\|_{C^k}+\|g\|_{C^k})}(\|f\|_{C^k}+\|g\|_{C^k})$, whence (\ref{est-it-1}) holds for $L=2$. If we are given ${f_1,\ldots, f_L\in\mathcal{V}}$ with $L\geq 3$ and %
\begin{equation}\label{nowbdd}
\|f_1\|_{C^k}+\cdots+\|f_L\|_{C^k}\leq 1,
\end{equation}
let $f:=f_L$ and $g:=f_{L-1}*\cdots* f_1$. Using (\ref{thesum3}) and the inductive
hypothesis (the case of $L-1$ factors), we see that the right-hand side of~(\ref{thesum3}) is less or equal
\[
  \begin{split}
    e^{\|f_1\|_{C^k}}\|g\|_{C^k} &\leq e^{\|f_L\|_{C^k}} M_k\, e^{k(\|f_{L-1}\|_{C^k}+\cdots+\|f_1\|_{C^k})}(\|f_{L-1}\|_{C^k}+\cdots +\|f_1\|_{C^k})\\
    &\leq M_k\, e^{k\sum_{j=1}^L\|f_j\|_{C^k}}(\|f_{L-1}\|_{C^k}+\cdots +\|f_1\|_{C^k}).
  \end{split}
\]
With $h:=\id_\Omega+g=(\id_\Omega+f_{L-1})\circ\cdots\circ (\id_\Omega+f_1)$,
the norm of a summand in (\ref{thesum}) is bounded by
\[
\|f_L\|_{C^k}\|h^{(\lvert I_1 \rvert)}\|_{\infty,\op}
\cdots\|h^{(\lvert I_j \rvert)}\|_{\infty,\op}.
\]
For $a\in\{1,\ldots, j\}$, possible cases are that $\lvert I_a \rvert=1$; then
\begin{eqnarray*}
\|h^{(\lvert I_a \rvert)}\|_{\infty,\op} &=& \|((\id_\Omega+f_{L-1})\circ\cdots\circ(\id_\Omega+f_1))'\|_{\infty,\op}\\
&\leq & (1+\|f_{L-1}\|_{\infty,\op})\cdots(1+\|f_1\|_{\infty,\op})\\
&\leq &(1+\|f_{L-1}\|_{C^k})\cdots(1+\|f_1\|_{C^k}) \leq e^{\lvert I_a \rvert(\|f_{L-1}\|_{C^k}+\cdots+\|f_1\|_{C^k})}\\
&\leq & M_{\lvert I_a\rvert }e^{\lvert I_a \rvert(\|f_1\|_{C^k}+\cdots+\|f_L\|_{C^k})}.
\end{eqnarray*}
If $\lvert I_a \rvert \geq 2$, then
\begin{eqnarray*}
\| h^{(\lvert I_a \rvert )} \|_{\infty,\op}
& = & \| g^{(\lvert I_a\rvert )} \|_{\infty,\op}
\leq \|g\|_{C^{\lvert I_a \rvert}} \\
&\leq & M_{\lvert I_a \rvert}\, e^{\lvert I_a \rvert(\|f_{L-1}\|_{C^k}+\cdots+\|f_1\|_{C^k})}(\|f_{L-1}\|_{C^k}+\cdots+ \|f_1\|_{C^k}) \\
&\leq & M_{\lvert I_a \rvert} \, e^{\lvert I_a \rvert(\|f_1\|_{C^k}+\cdots+\|f_L\|_{C^k})},
\end{eqnarray*}
using (\ref{nowbdd}) for the final estimate.
As
\[
e^{\lvert I_1 \rvert(\|f_1\|_{C^k}+\cdots+\|f_L\|_{C^k})}\cdots
e^{\lvert I_j \rvert(\|f_1\|_{C^k}+\cdots+\|f_L\|_{C^k})}=e^{k(\|f_1\|_{C^k}+\cdots+\|f_L\|_{C^k})},
\]
we get
\begin{eqnarray*}
\lefteqn{\|(f_L*\cdots*f_1)^{(k)}\|_{\infty,\op}}\qquad\qquad\\
&\leq &
\left(\sum_{j=2}^k\sum_{P\in P_{k,j}}
M_{\lvert I_1 \rvert}\cdots M_{\lvert I_j \rvert}\right)\|f_1\|_{C^k}e^{k(\|f_1\|_{C^k}+\cdots+\|f_L\|_{C^k})}\\[.3mm]
& & \; +\, M_k\, e^{k(\|f_1\|_{C^k}+\cdots+\|f_L\|_{C^k})}(\|f_2\|_{C^k}+\cdots+
\|f_L\|_{C^k})\\[1mm]
& \leq & M_k\, e^{k\sum_{j=1}^L\|f_j\|_{C^k}}(\|f_1\|_{C^k}+\cdots+\|f_L\|_{C^k}),
\end{eqnarray*}
whence
$\|f_L*\cdots*f_1\|_{C^k}
\leq
M_k\, e^{k\sum_{j=1}^L\|f_j\|_{C^k}}(\|f_1\|_{C^k}+\cdots+\|f_L\|_{C^k})$.
This completes the proof. 
\end{proof}
\begin{rem}
Proposition~\ref{est-iter} was inspired by a recent result from the theory of evolution equations on infinite-dimensional
Lie groups (regularity theory). Consider a Lie group~$G$ modeled on a locally convex space~$E$, with neutral element~$e$.
Let $TG$ be the tangent bundle, $\cg:=T_eG$ and $G\times TG\to TG$, $(g,v)\mapsto g.v:=T L_g(v)$
be the natural left action of~$G$ on $TG$, which uses the tangent map $T L_g$ of the left translation $L_g\colon G\to G$, $x\mapsto gx$.
The Lie group~$G$ is called \emph{$C^0$-regular} if, for each continuous path $\gamma\colon [0,1]\to \cg$, there exists a $C^1$-map $\eta\colon [0,1]\to G$
with $\eta(0)=e$ and
\[
\eta'(t)=\eta(t).\gamma(t)\;\;\mbox{for all $\,t\in [0,1]$;}
\]
and moreover the time $1$-map $C([0,1],\cg)\to G$, $\gamma\mapsto\eta(1)$ is smooth (endowing the domain with the topology of uniform convergence);
see \cite{glockner2012,neeb2006}.
As shown by Hanusch~\cite{hanusch2017}, every $C^0$-regular Lie group is \emph{locally $\mu$-convex} in the sense of~\cite{gloeckner2017}.
Thus, for each open identity neighborhood $U\subseteq G$ and $C^\infty$-diffeomorphism $\phi \colon U\to V\subseteq E$ with $\phi(e)=0$, and each continuous seminorm
$q\colon E\to[0,\infty[$, there exists a continuous seminorm $p\colon E\to [0,\infty[$ such that
\[g_L\cdots g_2g_1\in U \]
and
\[
q(\phi(g_L\cdots g_1))\leq \, p(\phi(g_L))+\cdots+p(\phi(g_1))
\]
for all $L\in\N$ and $g_1,\ldots, g_L\in U$
such that $p(\phi(g_L))+\cdots+p(\phi(g_1))\leq 1$.
As shown in \cite{gloeckner2017}, the Lie group $\Diff_{\partial}(\Omega)$ is $C^0$-regular. Applying
the preceding fact to the $C^\infty$-diffeomorphism $\kappa\colon \id_\Omega+\mathcal{V}\to \mathcal{V}$, $g\mapsto g-\id_\Omega$
and the seminorm $q:=\|\cdot\|_{C^k}$ with a fixed $k\in \N_0$, we infer the existence
of a continuous seminorm $p$ on $C^\infty_{\partial}(\Omega, \mathbb{R}^d)$ such that
\[
\|((\id_\Omega+f_L)\circ\cdots\circ (\id_\Omega+f_1))-\id_\Omega\|_{C^k}\leq p(f_1)+\cdots+p(f_L)
\]
for all $L\in\N$ and $f_1,\ldots, f_L\in \mathcal{V}$ with $p(f_1)+\cdots+p(f_L)\leq 1$.
We may assume that $p$ is a multiple $r\|\cdot\|_{C^\ell}$ of $\|\cdot\|_{C^\ell}$ for some $\ell \geq k$, as these seminorms
define the topology on $C^\infty_{\partial}(\Omega ,\mathbb{R}^d)$. The point of Proposition~\ref{est-iter} is that we can always choose $\ell=k$,
and that it provides an explicit choice for the constant $r>0$.\\[2.3mm]
Previously, $\mu$-regularity has been discussed in quantitative form only for Banach-Lie groups,
where estimates for the Baker-Campbell-Hausdorff multiplication $*$ on the Banach-Lie algebra $\cg$ enable norms
$\|x_1*\cdots* x_m\|$ of products of elements $x_1,\ldots, x_m\in\cg$ to be estimated (see \cite{glockner2012}).
\end{rem}

\section{Practical upper bounds for \texorpdfstring{{\boldmath$C^k$}}{}-norm} \label{sect: appendix norm estimates}
This appendix describes the methods that were used for the values in figure \ref{fig: norm estimates}. The goal is to estimate how the practically achieved $C^k$-norms compare to the theoretical results of proposition \ref{est-iter}. For simplicity, we stick to the one-dimensional case. Since both sides of the inequality \ref{est-it-1} relies on the chosen vector fields \( f_\ell \), we start by reorganizing the inequality according to 
\begin{equation}
    \frac{\|((\id_\Omega+f_L)\circ \cdots \circ (\id_\Omega+f_1))-\id_\Omega\|_{C^k}}{ \left(\sum_{\ell}^{L} \|f_\ell\|_{C^k}\right) e^{k\sum_{\ell}^{L} \|f_\ell\|_{C^k}} } \leq M_k.
\end{equation}

\subsubsection*{Estimating \texorpdfstring{{\boldmath$C^k$}}{}-norms}
Let \( X=\{x_i\}_{i=1}^N \in\Omega \) be a collection of points tightly distributed throughout the domain \( \Omega \). Then we estimate the \( C^k \)-norm of a function \( f \in \Omega \) by $\|f\|_{C^k} = \|D^k f\|_{\text{op}, \infty} = \max_{x \in \Omega}\|D^k f(x)\|_{\text{op}} \approx \max_{\mathbf{x}_i \in X} \|D^k f(\mathbf{x_i})\|_{\text{op}}$
In the one-dimensional case, this reduces to
\begin{equation}
    \max_{x_i \in X} \left\lvert \frac{\partial^k f}{\partial x^k}(x_i)\right\rvert
\label{eq: norm estimate one dimensional} 
\end{equation}
where  \( X \) is typically chosen as an equidistant grid of points \( X = \left\{ i / N  \; \middle\vert \;  i=0, ..., N\ \right\}  \).

\subsubsection*{Choosing vector fields}
Consider vector fields of the form
\begin{equation}
    f_\ell(x) = \sum_{j=1}^{M} w_{\ell, j} \varphi_j(x),
\label{eq: vector field finite dimensional} 
\end{equation}
for basis functions as given in \eqref{eq: curve basis} and some weight vector \( \mathbf{w}_\ell \in \mathbb{R}^M \). Since different weight vectors might yield different results for the relative norm estimates, we use two different strategies for initializing the weight vectors:
\begin{enumerate}
    \item Deterministic vector of ones: \( w_{\ell, j}=1, \, j=1,...,M, \, \ell=1,...,L\), and
    \item Normal random sampling: \(w_{\ell, j} \sim \mathcal N(\mu=0, \sigma^2=1)\).
\end{enumerate}
The values of \eqref{eq: norm estimate one dimensional} presented in figure \ref{fig: norm estimates} are chosen as the largest attained value from strategy one and 500 runs of strategy 2.

\subsubsection*{Ensuring the assumptions hold}
The weight vectors cannot be chosen freely, but are assumed to adhere to a unit norm-sum constraint. To ensure that this condition holds, we will typically start out with an initial set of weight vectors, which will be scaled onto the feasible set. Let \( W = \{\mathbf{w}_\ell\}_{\ell=1}^L\) be a collection of  weight vectors, from which we create a set of vector fields \( f_\ell,\,\ell=1, ..., L \). Moreover, assume that for this set of vector fields 
\begin{equation}
    \sum_{\ell=1}^{L} \|f_\ell\|_{C^k} > 1.
\label{eq: norm sum violated} 
\end{equation}
The scaling will be performed in a way that is similar to the methods of section \ref{sect:methods-curves}; Find a scalar \( \alpha \in (0, 1] \) such that \( \alpha \sum_{\ell=1}^{L}\|f_\ell\|_{C^k} \leq 1 \), i.e. 
\[
   \alpha = \min\left\{1, \frac{1}{ \sum_{\ell=1}^{L} \|f_\ell\|_{C^k}}\right\}.
\]
and then choose $ \tilde W = \{\alpha \mathbf{w}_\ell\}_{\ell=1}^L  $.




\section*{Competing Interests}
The authors declare no competing interests related to the contents of the article.

\bibliographystyle{spmpsci}      
\bibliography{references.bib}   

\begin{thebibliography}{10}
\providecommand{\url}[1]{{#1}}
\providecommand{\urlprefix}{URL }
\expandafter\ifx\csname urlstyle\endcsname\relax
  \providecommand{\doi}[1]{DOI~\discretionary{}{}{}#1}\else
  \providecommand{\doi}{DOI~\discretionary{}{}{}\begingroup
  \urlstyle{rm}\Url}\fi

\bibitem{agrachev2009}
Agrachev, A., Caponigro, M.: Controllability on the group of diffeomorphisms.
\newblock Annales de l{\textquotesingle}Institut Henri Poincar{\'{e}} C,
  Analyse non lin{\'{e}}aire \textbf{26}(6), 2503--2509 (2009).
\newblock \doi{10.1016/j.anihpc.2009.07.003}

\bibitem{agrachev2021}
Agrachev, A., Sarychev, A.: Control on the manifolds of mappings with a view to
  the deep learning.
\newblock Journal of Dynamical and Control Systems  (2021).
\newblock \doi{10.1007/s10883-021-09561-2}

\bibitem{bauer2014}
Bauer, M., Bruveris, M., Marsland, S., Michor, P.W.: Constructing
  reparameterization invariant metrics on spaces of plane curves.
\newblock Differential Geometry and its Applications \textbf{34}, 139--165
  (2014).
\newblock \doi{10.1016/j.difgeo.2014.04.008}

\bibitem{behrmann2019}
Behrmann, J., Grathwohl, W., Chen, R.T.Q., Duvenaud, D., Jacobsen, J.H.:
  Invertible residual networks.
\newblock In: K.~Chaudhuri, R.~Salakhutdinov (eds.) Proceedings of the
  36\textsuperscript{th} International Conference on Machine Learning,
  \emph{Proceedings of Machine Learning Research}, vol.~97, pp. 573--582. PMLR,
  Long Beach, California, USA (2019).
\newblock \urlprefix\url{https://proceedings.mlr.press/v97/behrmann19a.html}

\bibitem{behrmann2021}
Behrmann, J., Vicol, P., Wang, K.C., Grosse, R., Jacobsen, J.H.: Understanding
  and mitigating exploding inverses in invertible neural networks.
\newblock In: A.~Banerjee, K.~Fukumizu (eds.) Proceedings of The
  24\textsuperscript{th} International Conference on Artificial Intelligence
  and Statistics, \emph{Proceedings of Machine Learning Research}, vol. 130,
  pp. 1792--1800. PMLR, Virtual (2021).
\newblock \urlprefix\url{https://proceedings.mlr.press/v130/behrmann21a.html}

\bibitem{Benning2019ode}
Benning, M., Celledoni, E., Ehrhardt, M.J., Owren, B., Sch{\"{o}}nlieb, C.B.:
  {Deep learning as optimal control problems: models and numerical methods}.
\newblock Journal of Computational Dynamics \textbf{6}(2), 171--198 (2019).
\newblock \doi{10.3934/jcd.2019009}.
\newblock \urlprefix\url{http://arxiv.org/abs/1904.05657}

\bibitem{celledoni2021}
Celledoni, E., Ehrhardt, M.J., Etmann, C., Mclachlan, R.I., Owren, B.,
  Schonlieb, C.B., Sherry, F.: Structure-preserving deep learning.
\newblock European Journal of Applied Mathematics \textbf{32}(5), 888--936
  (2021).
\newblock \doi{10.1017/s0956792521000139}

\bibitem{celledoni2017}
Celledoni, E., Eidnes, S., Eslitzbichler, M., Schmeding, A.: Shape analysis on
  {L}ie groups and homogeneous spaces.
\newblock In: F.~Nielsen, F.~Barbaresco (eds.) Geometric Science of
  Information, pp. 49--56. Springer International Publishing, Cham (2017).
\newblock \doi{10.1007/978-3-319-68445-1_6}

\bibitem{celledoni2016}
Celledoni, E., Eslitzbichler, M., Schmeding, A.: Shape analysis on {L}ie groups
  with applications in computer animation.
\newblock Journal of Geometric Mechanics \textbf{8}(3), 273--304 (2016).
\newblock \doi{10.3934/jgm.2016008}

\bibitem{clark2012}
Clark, D.E., Houssineau, J.: Faa di bruno's formula for gateaux differentials
  and interacting stochastic population processes (2012).
\newblock \doi{10.48550/ARXIV.1202.0264}

\bibitem{cybenko1989}
Cybenko, G.: Approximation by superpositions of a sigmoidal function.
\newblock Mathematics of Control, Signals, and Systems \textbf{2}(4), 303--314
  (1989).
\newblock \doi{10.1007/bf02551274}

\bibitem{glockner2012}
Gl{\"o}ckner, H.: Regularity properties of infinite-dimensional lie groups, and
  semiregularity (2012).
\newblock \doi{10.48550/ARXIV.1208.0715}

\bibitem{gloeckner2022}
Gl{\"{o}}ckner, H.: Diffeomorphism groups of convex polytopes (2022).
\newblock \doi{10.48550/ARXIV.2203.09285}

\bibitem{gloeckner2017}
Gl{\"{o}}ckner, H., Neeb, K.H.: Diffeomorphism groups of compact convex sets.
\newblock Indagationes Mathematicae \textbf{28}(4), 760--783 (2017).
\newblock \doi{10.1016/j.indag.2017.04.004}

\bibitem{gloeckner2015}
Gl{\"{o}}ckner, H., Neeb, K.H.: Infinite-Dimensional Lie Groups (2022).
\newblock Book in preparation.

\bibitem{haber2017}
Haber, E., Ruthotto, L.: Stable architectures for deep neural networks.
\newblock Inverse Problems \textbf{34}(1), 014,004 (2017).
\newblock \doi{10.1088/1361-6420/aa9a90}

\bibitem{hamilton1982}
Hamilton, R.S.: The inverse function theorem of {N}ash and {M}oser.
\newblock Bulletin of the American Mathematical Society \textbf{7}(1), 65--222
  (1982).
\newblock \doi{10.1090/s0273-0979-1982-15004-2}

\bibitem{hanusch2017}
Hanusch, M.: Regularity of {L}ie groups.
\newblock Communications in Analysis and Geometry \textbf{30}(1), 53--152
  (2022).
\newblock \doi{10.4310/cag.2022.v30.n1.a2}.
\newblock \urlprefix\url{https://doi.org/10.4310/cag.2022.v30.n1.a2}

\bibitem{He}
He, K., Zhang, X., Ren, S., Sun, J.: Deep residual learning for image
  recognition.
\newblock IEEE Conference on Computer Vision and Pattern Recognition. (1),
  770--778 (2016)

\bibitem{jermyn2012}
Jermyn, I.H., Kurtek, S., Klassen, E., Srivastava, A.: Elastic shape matching
  of parameterized surfaces using square root normal fields.
\newblock In: A.~Fitzgibbon, S.~Lazebnik, P.~Perona, Y.~Sato, C.~Schmid (eds.)
  Computer Vision -- ECCV 2012, pp. 804--817. Springer Berlin Heidelberg,
  Berlin, Heidelberg (2012).
\newblock \doi{10.1007/978-3-642-33715-4_58}

\bibitem{keys1981}
Keys, R.: Cubic convolution interpolation for digital image processing.
\newblock {IEEE} Transactions on Acoustics, Speech, and Signal Processing
  \textbf{29}(6), 1153--1160 (1981).
\newblock \doi{10.1109/tassp.1981.1163711}

\bibitem{klassen2019}
Klassen, E., Michor, P.W.: Closed surfaces with different shapes that are
  indistinguishable by the srnf (2019).
\newblock \doi{10.48550/ARXIV.1910.10804}

\bibitem{kurtek2010}
Kurtek, S., Klassen, E., Ding, Z., Srivastava, A.: A novel riemannian framework
  for shape analysis of 3d objects.
\newblock In: 2010 IEEE Computer Society Conference on Computer Vision and
  Pattern Recognition, pp. 1625--1632 (2010).
\newblock \doi{10.1109/CVPR.2010.5539778}

\bibitem{kurtek2012}
Kurtek, S., Klassen, E., Gore, J.C., Ding, Z., Srivastava, A.: Elastic geodesic
  paths in shape space of parameterized surfaces.
\newblock {IEEE} Transactions on Pattern Analysis and Machine Intelligence
  \textbf{34}(9), 1717--1730 (2012).
\newblock \doi{10.1109/tpami.2011.233}

\bibitem{mani2010}
Mani, M., Kurtek, S., Barillot, C., Srivastava, A.: {A Comprehensive Riemannian
  Framework for the Analysis of White Matter Fiber Tracts}.
\newblock In: IEEE (ed.) {IEEE-ISBI}, pp. 1101-- 1104. {IEEE}, Rotterdam,
  Netherlands (2010).
\newblock \doi{10.1109/ISBI.2010.5490185}.
\newblock \urlprefix\url{https://www.hal.inserm.fr/inserm-00723805}

\bibitem{michor1980}
Michor, P.W.: Manifolds of Differentiable Mappings.
\newblock Shiva Pub., Orpington (1980)

\bibitem{milnor1984}
Milnor, J.: Remarks on infinite-dimensional {L}ie groups.
\newblock In: Relativity, groups and topology. 2 (1984)

\bibitem{neeb2006}
Neeb, K.H.: Towards a {L}ie theory of locally convex groups.
\newblock Japanese Journal of Mathematics \textbf{1}(2), 291--468 (2006).
\newblock \doi{10.1007/s11537-006-0606-y}

\bibitem{nocedal2006}
Nocedal, J., Wright, S.: Numerical Optimization (Springer Series in Operations
  Research and Financial Engineering).
\newblock Springer, New York (2006).
\newblock \doi{https://doi.org/10.1007/978-0-387-40065-5}

\bibitem{omori1974}
Omori, H.: Infinite Dimensional Lie Transformations Groups.
\newblock Springer-Verlag, Berlin (1974).
\newblock \doi{10.1007/bfb0063400}

\bibitem{paszke2019}
Paszke, A., Gross, S., Massa, F., Lerer, A., Bradbury, J., Chanan, G., Killeen,
  T., Lin, Z., Gimelshein, N., Antiga, L., Desmaison, A., Kopf, A., Yang, E.,
  DeVito, Z., Raison, M., Tejani, A., Chilamkurthy, S., Steiner, B., Fang, L.,
  Bai, J., Chintala, S.: Pytorch: An imperative style, high-performance deep
  learning library.
\newblock In: H.~Wallach, H.~Larochelle, A.~Beygelzimer, F.~d\textquotesingle
  Alch\'{e}-Buc, E.~Fox, R.~Garnett (eds.) Advances in Neural Information
  Processing Systems, vol.~32. Curran Associates, Inc., Vancouver, BC, Canada
  (2019).
\newblock
  \urlprefix\url{https://proceedings.neurips.cc/paper/2019/file/bdbca288fee7f92f2bfa9f7012727740-Paper.pdf}

\bibitem{riordan1976}
Riordan, J.: The blossoming of {S}chr{\"{o}}der's fourth problem.
\newblock Acta Mathematica \textbf{137}(0), 1--16 (1976).
\newblock \doi{10.1007/bf02392410}

\bibitem{riseth2021}
Riseth, J.N.: Gradient-based optimization in shape analysis for
  reparametrization of parametric curves and surfaces.
\newblock Master's thesis, NTNU (2021).
\newblock \urlprefix\url{https://hdl.handle.net/11250/2778386}

\bibitem{schroeder1870}
Schr{\"o}der, E.: Vier combinatorische {P}robleme.
\newblock Z. Math. Phys \textbf{15}, 361--376 (1870)

\bibitem{srivastava2011}
Srivastava, A., Klassen, E., Joshi, S.H., Jermyn, I.: Shape analysis of elastic
  curves in euclidean spaces.
\newblock {IEEE} Transactions on Pattern Analysis and Machine Intelligence
  \textbf{33}(7), 1415--1428 (2011).
\newblock \doi{10.1109/tpami.2010.184}

\bibitem{su2020}
Su, Z., Bauer, M., Preston, S.C., Laga, H., Klassen, E.: Shape analysis of
  surfaces using general elastic metrics.
\newblock Journal of Mathematical Imaging and Vision \textbf{62}(8), 1087--1106
  (2020).
\newblock \doi{10.1007/s10851-020-00959-4}

\bibitem{tabak2010}
Tabak, E.G., Vanden-Eijnden, E.: Density estimation by dual ascent of the
  log-likelihood.
\newblock Communications in Mathematical Sciences \textbf{8}(1), 217--233
  (2010)

\bibitem{ulyanov2018deep}
Ulyanov, D., Vedaldi, A., Lempitsky, V.: Deep image prior.
\newblock In: Proceedings of the IEEE conference on computer vision and pattern
  recognition, pp. 9446--9454 (2018)

\bibitem{weinan2017proposal}
Weinan, E.: A proposal on machine learning via dynamical systems.
\newblock Communications in Mathematics and Statistics \textbf{5}(1), 1--11
  (2017)

\end{thebibliography}

\end{document}